%
%
%
%
\documentclass{amsart}

\usepackage{amsfonts,amssymb,amsmath}
\usepackage{graphicx}
\usepackage{psfrag}

\newtheorem{theo}{Theorem}[section]
\newtheorem{lem}[theo]{Lemma}

\newtheorem{cor}{Corollary}[section]

\theoremstyle{definition}
\newtheorem{defi}[theo]{Definition}

\theoremstyle{remark}
\newtheorem{rem}[theo]{Remark}

\newtheorem{exam}{Example}[section]

\numberwithin{equation}{section}




\newcommand{\bR}{{\mathbb R}}
\newcommand{\bZ}{{\mathbb Z}}
\newcommand{\bZp}{{{\mathbb Z}^+}}


\newcommand{\bfi}{{\mathbf i}}
\newcommand{\bfj}{{\mathbf j}}

\newcommand{\bfx}{{\mathbf x}}


\newcommand{\SB}{{\mathcal B}}

\newcommand{\SH}{{\mathcal H}}




\def\gep{\varepsilon}
\def\ga{\alpha}
\def\gb{\beta}
\def\gL{\Lambda}

\def\wdt{\widetilde}

\def\PVn{\mathcal{P}_V^{(n)}}

\def\N{\mathcal{N}}

\begin{document}

\title[Recurrent fractal interpolation surfaces]{Construction and box dimension of \\ recurrent fractal interpolation surfaces}



\author{Zhen Liang}
\address{Department of Statistics, Nanjing Audit University, Nanjing 211815, China}
\email{zhenliang@nau.edu.cn}

\author{Huo-Jun Ruan}
\address{School of Mathematical Sciences, Zhejiang University, Hangzhou 310027, China}
\email{ruanhj@zju.edu.cn}
\thanks{The research was supported in part by
the NSFC grant 11771391, and by ZJNSFC grant LR14A010001.}



\thanks{Corresponding author: Huo-Jun Ruan}

\subjclass[2010]{Primary 28A80; Secondary 41A30}

\date{}


\keywords{Recurrent fractal interpolation surfaces, box dimension, iterated function systems, vertical scaling factors}

\begin{abstract}
  In this paper, we present a general framework to construct recurrent fractal interpolation surfaces (RFISs) on rectangular grids. Then we introduce bilinear RFISs, which are easy to be generated while there are no restrictions on interpolation points and vertical scaling factors. We also obtain the box dimension of bilinear RFISs under certain constraints, where the main assumption is that vertical scaling factors have uniform sums under a compatible partition.
\end{abstract}

\maketitle

\section{Introduction}
\label{intro}







Fractal interpolation functions (FIFs) were introduced by  Barnsley \cite{Bar86} in 1986. The graphs of these functions are invariant sets of certain iterated function systems (IFSs) and they are ideally suited for the approximation of naturally occurring functions.  In \cite{BEH89}, Barnsley, Elton and Hardin generalized the notion of FIFs to recurrent FIFs, whose graphs are invariant sets of certain recurrent IFSs. There are many theoretic works and applications of FIFs and recurrent FIFs. Please see \cite{BRSarx,Bar93, BHVV16, CKV15, MaHa92, VDL94, WY13} for examples.

Naturally, we want to generalize FIFs and recurrent FIFs to higher dimensional cases, especially, the two-dimensional case. However, while it is easy to construct similar IFSs and recurrent IFSs as in one dimensional case, it is hard to guarantee that their invariant sets are graphs of continuous functions.

In \cite{Mas90}, Massopust introduced fractal interpolation surfaces (FISs) on triangles, where the interpolation points on the boundary are required to be coplanar. Dalla~\cite{Dal02} constructed FISs on rectangular grids, where the interpolation points on the boundary are collinear. Feng \cite{Fen08} presented a more general construction of FISs on rectangular grids, while there are restrictive conditions on vertical scaling factors.

Using a `fold-out' technique introduced by Ma{\l}ysz \cite{Mal06} and developed by Metzler and Yun \cite{MeYu10}, Ruan and Xu \cite{RuanXu15} presented  a general framework to construct FISs on rectangular grids. Based on this work, Verma and Viswanathan~\cite{VerVisArx} studied a bounded linear operator analogous to the so-called $\ga$-fractal operator associated with the univariate FIFs \cite{Nav10}. Ruan and Xu \cite{RuanXu15} also introduced bilinear FISs. One advantage of bilinear FISs is that there is no restriction on interpolation points and vertical scaling factors. We remark that some ideas of \cite{RuanXu15} are similar to the paper on the one-dimensional bilinear FIFs by Barnley and Massopust \cite{BaMa15}.


As pointed in \cite{RuanXu15}, we expect that bilinear FISs will be used to generate some natural scenes. However, in order to fit given data more effectively, we need recurrent FISs (RFISs). Bouboulis, Dalla and Drakopoulos~\cite{BoDa07,BDD06}, and Yun, Choi and O~\cite{YCO15} presented some nice methods to generate RFISs with function scaling factors, while the restrictive conditions for continuity are hard to check.

In this paper, by extending the methods in \cite{BEH89} and \cite{RuanXu15}, we present a general framework to construct RFISs on rectangular grids and introduce bilinear RFISs. Similarly to bilinear FISs constructed in \cite{RuanXu15}, bilinear RFISs can be defined on rectangular grids without any restrictions on interpolation points and vertical scaling factors.

It is one of the basic problems in fractal geometry to obtain the box dimension of fractal sets. In \cite{BEH89}, Barnely, Elton and Hardin obtained the box dimension of affine recurrent FIFs in one-dimensional case, where they assumed that the correlation matrix is irreducible. The result was generalized by Ruan, Sha and Ye \cite{RSY00}, where there is no restrict condition on the correlation matrix. We remark that in \cite{BEH89,RSY00} and in the later works such as \cite{BoDa07,BDD06}, the vertical scaling factors are constants.

In the case of bilinear FIFs (in one dimensional case) and bilinear FISs, it is very difficult to obtain their box dimension without additional assumptions since vertical scaling factors are functions. The main assumption in \cite{BaMa15,KRZ18} is that vertical scaling factors have a uniform sum.

In this paper, stronger restrict conditions are needed to obtain the box dimension since we deal with bilinear RFISs. We assume that vertical scaling factors have uniform sums under a compatible partition. From Lemmas~\ref{lem:vertScal} and~\ref{lem:Osi-local-ineq-1}, we can see that the assumption is reasonable.

We remark that by using the method of this paper and other previous papers, the authors also studied the construction and the box dimension of RFISs on triangular domains in \cite{LR19}.


The paper is organized as follows. In section~2, we present a general framework to construct RFISs. Bilinear RFISs are introduced in Section~3. In Section~4 we obtain the box dimension of bilinear RFISs under certain constraints.



\section{Construction of Recurrent Fractal Interpolation Surfaces}

For every positive integer $N$, we denote $\Sigma_N=\{1,2,\ldots,N\}$ and $\Sigma_{N,0}=\{0,1,\ldots,N\}$.


Given a data set $\{(x_{i}, y_{j},z_{ij})\in \mathbb{R}^{3}:\,i\in\Sigma_{N,0},j\in\Sigma_{M,0}\}$ with
$$x_{0}<x_{1}<\cdots<x_{N}, \quad y_{0}<y_{1}<\cdots<y_{M},$$
where $N,M\geq 2$ are positive integers, we want to construct a fractal function which interpolates the data set.

Denote $I=[x_0,x_N]$ and $J=[y_0,y_M]$. For any $i\in\Sigma_{N}$ and $j\in\Sigma_{M}$, we denote
$I_{i}=[x_{i-1},x_{i}], J_{j}=[y_{j-1},y_{j}]$, and $D_{ij}=I_i\times J_j$.


We choose $x_0^\prime, x_1^\prime,\ldots,x_N^\prime$ to be a finite sequence in $\{x_i:\, i\in \Sigma_{N,0}\}$ such that
\begin{equation*}
  |x_{i}^\prime-x_{i-1}^\prime| > x_i-x_{i-1}, \quad \forall \, i\in \Sigma_N.
\end{equation*}
For each $i\in \Sigma_N$, we denote by $I'_i$ the closed interval with endpoints $x_{i-1}^\prime$ and $x_i^\prime$, and let $u_{i}:\, I_i^\prime\to I_i$ be a contractive homeomorphism with
\begin{equation}\label{eq:ui-cond}
 u_i(x_{i-1}^\prime)=x_{i-1}, \quad u_i(x_i^\prime)=x_i.
\end{equation}
Clearly, $u_i(x_i^\prime)=u_{i+1}(x_i^\prime)=x_i$ for all $i\in \Sigma_{N-1}$, which is an important property in our construction.

Similarly, we choose $y_0^\prime, y_1^\prime,\ldots,y_M^\prime$ to be a finite sequence in $\{y_j:\, j\in \Sigma_{M,0}\}$ such that
\begin{equation*}
  |y_{j}^\prime-y_{j-1}^\prime| > y_j-y_{j-1}, \quad \forall \, j\in \Sigma_M.
\end{equation*}
For each $j\in \Sigma_M$, we denote by $J'_j$ the closed interval with endpoints $y'_{j-1}$ and $y'_j$, and let $v_{j}:\, J_j^\prime\to J_j$ be a contractive homeomorphism with
\begin{equation}\label{eq:vj-cond}
 v_j(y_{j-1}^\prime)=y_{j-1}, \quad v_j(y_j^\prime)=y_j.
\end{equation}
Clearly, $v_j(y_j^\prime)=v_{j+1}(y_j^\prime)=y_j$ for all $j\in \Sigma_{M-1}$.


For all $i\in \Sigma_{N,0}$ and $j\in \Sigma_{M,0}$, we denote by $z'_{ij}$ to be the $z_{pq}$ satisfying $x_p=x'_i$ and $y_q=y'_j$.

For $i\in\Sigma_{N}$ and $j\in\Sigma_{M}$, we denote $D'_{ij}=I^\prime_{i}\times J^\prime_j$, and
define a continuous function $F_{ij}:\, D'_{ij}\times \mathbb{R}\rightarrow\mathbb{R}$ satisfying
\begin{equation}\label{eq: Fij-cond-1}
 F_{ij}(x'_{k},y'_{\ell},z'_{k\ell})=z_{k\ell}
\end{equation}
for all $(k,\ell)\in \{i-1,i\} \times \{j-1,j\}$, and
\begin{equation}\label{eq: Fij-cond-2}
 |F_{ij}(x,y,z^\prime)-F_{ij}(x,y,z^{\prime\prime})|\leq \alpha_{ij}|z'-z''|
\end{equation}
for all $(x,y)\in D^\prime_{ij}$ and all $z',z''\in\mathbb{R}$, where $\alpha_{ij}$ is a given constant with $0<\alpha_{ij}<1$.

Now we define a map $W_{ij}:D'_{ij}\times \mathbb{R}\rightarrow D_{ij}\times\mathbb{R}$ by
  \begin{equation}\label{wij}
    W_{ij}(x,y,z)=(u_{i}(x),v_{j}(y),F_{ij}(x,y,z)).
  \end{equation}
Then
\[
  W_{ij}(x'_k,y'_\ell, z'_{k\ell}) = (x_k,y_\ell, z_{k\ell}), \quad (k,\ell)\in \{i-1,i\} \times \{j-1,j\},
\]
and $\{D'_{ij}\times \mathbb{R}, W_{ij}; (i,j)\in\Sigma_{N}\times\Sigma_{M}\}$ is a \emph{recurrent iterated function system} (recurrent IFS).

For $i\in \Sigma_N$ and $j\in \Sigma_M$, we denote by $\mathcal{H}(D_{ij}\times \mathbb{R})$ the family of all non-empty compact subsets of $D_{ij}\times \mathbb{R}$. Let $\widetilde{\SH}$ be the product of all $\SH(D_{ij}\times \mathbb{R})$, i.e.,
\begin{equation*}
  \widetilde{\SH}=\{(A_{ij})_{1\leq i\leq N,1\leq j\leq N}:\, A_{ij}\in \SH(D_{ij}\times \mathbb{R}) \mbox{ for all } i,j\}.
\end{equation*}



For any $A=(A_{ij})\in \widetilde{\SH}$, we define $W(A)\in \widetilde{\SH}$ by
\begin{equation*}
  \big( W(A)\big)_{ij} = \cup\{ W_{ij}(A_{k\ell}):\, (k,\ell)\in \Sigma_N\times\Sigma_M \textrm{ and } D_{k\ell}\subset D_{ij}^\prime \}
\end{equation*}
for all $1\leq i\leq N$ and $1\leq j\leq M$.

For any continuous function $f$ on $I\times J$ and $(i,j)\in\Sigma_N\times\Sigma_M$,  we denote
  $$\Gamma f|_{D_{ij}}=\{(x,y,f(x,y)): (x,y)\in D_{ij}\},$$
which is the graph of $f$ restricted on $D_{ij}$. Similarly as in \cite{Bar86,BEH89,RuanXu15}, we have the following theorem.

\begin{theo}\label{theo: general-RFIF}
 Let $\{D'_{ij}\times \mathbb{R}, W_{ij}; (i,j)\in\Sigma_{N}\times\Sigma_{M}\}$ be the recurrent IFS defined by~\eqref{wij}. Assume that $\{F_{ij}: \, (i,j)\in\Sigma_N\times\Sigma_M\}$ satisfies the following matchable conditions:
\begin{itemize}
  \item[(1).] for all $i\in \Sigma_{N-1}, j\in\Sigma_{M}$, and $x^{*}=u_{i}^{-1}(x_{i})=u_{i+1}^{-1}(x_{i})$,
    \begin{equation}\label{eq: match-cond-1}
      F_{ij}(x^{*},y,z)=F_{i+1,j}(x^{*},y,z), \quad  \forall y\in J'_{j},\, z\in \mathbb{R}, \quad \mbox{and}
    \end{equation}
  \item[(2).] for all $i\in\Sigma_{N}, j\in \Sigma_{M-1}$, and $y^{*}=v_{j}^{-1}(y_{j})=v_{j+1}^{-1}(y_{j})$,
    \begin{equation}\label{eq: match-cond-2}
      F_{ij}(x,y^{*},z)=F_{i,j+1}(x,y^{*},z),  \quad \forall x\in I'_{i},\, z\in \mathbb{R}.
    \end{equation}
\end{itemize}
Then there exists a unique continuous function $f$ on $I\times J$, such that $f(x_{i},y_{j})=z_{ij}$ for all $i\in\Sigma_{N,0}$ and $j\in\Sigma_{M,0}$, and $( \Gamma f|_{D_{ij}})_{1\leq i\leq N, 1\leq j\leq M}$ is the invariant set of $W$, i.e., $\Gamma f|_{D_{ij}}=W_{ij}(\Gamma f|_{D_{ij}^\prime})$ for all $(i,j)\in\Sigma_N\times\Sigma_M$.
\end{theo}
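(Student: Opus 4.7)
The plan is to mimic the proof of the classical (non-recurrent) result by applying the Banach fixed point theorem to a Read--Bajraktarevi\'c-type operator on the space
\[
\mathcal{C}^{*} = \bigl\{ f\in C(I\times J) : f(x_i,y_j)=z_{ij} \text{ for all } (i,j)\in\Sigma_{N,0}\times\Sigma_{M,0}\bigr\},
\]
which, equipped with the uniform metric, is a closed subset of $C(I\times J)$ and hence complete. For $f\in\mathcal{C}^{*}$, I would define $Tf$ piecewise by
\[
(Tf)(x,y) = F_{ij}\bigl(u_i^{-1}(x),\, v_j^{-1}(y),\, f(u_i^{-1}(x),\, v_j^{-1}(y))\bigr), \qquad (x,y)\in D_{ij}.
\]
Since $u_i^{-1}(x)\in I'_i$ and $v_j^{-1}(y)\in J'_j$, the point $(u_i^{-1}(x),v_j^{-1}(y))$ lies in $D'_{ij}$, which is a union of full grid rectangles, so the formula makes sense for every $f\in C(I\times J)$.

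The crucial step is to show that $T$ maps $\mathcal{C}^{*}$ into itself. Within each $D_{ij}$, continuity of $Tf$ is immediate from the continuity of $F_{ij}$, $u_i^{-1}$, $v_j^{-1}$ and $f$. On a shared vertical edge $\{x_i\}\times J_j$ we have $u_i^{-1}(x_i)=u_{i+1}^{-1}(x_i)=x'_i$, so \eqref{eq: match-cond-1} gives
\[
F_{ij}\bigl(x'_i, v_j^{-1}(y), f(x'_i,v_j^{-1}(y))\bigr) = F_{i+1,j}\bigl(x'_i, v_j^{-1}(y), f(x'_i,v_j^{-1}(y))\bigr);
\]
horizontal edges are handled by \eqref{eq: match-cond-2}, and the four-fold corners $(x_i,y_j)$ follow by combining both matchable conditions. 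To verify that $Tf$ again interpolates the data, I would evaluate $(Tf)(x_i,y_j)$ from any $D_{k\ell}$ containing the node: since $(x'_p,y'_q)$ is itself a grid node and $f\in\mathcal{C}^{*}$, one has $f(x'_p,y'_q)=z'_{pq}$, so condition \eqref{eq: Fij-cond-1} forces $(Tf)(x_i,y_j)=z_{ij}$.

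Contractivity then follows directly from \eqref{eq: Fij-cond-2}: with $\alpha=\max_{i,j}\alpha_{ij}<1$, one obtains $\|Tf-Tg\|_\infty\le\alpha\,\|f-g\|_\infty$. The Banach fixed point theorem produces a unique $f\in\mathcal{C}^{*}$ with $Tf=f$. Rewriting the fixed-point equation as $f(u_i(x),v_j(y))=F_{ij}(x,y,f(x,y))$ for $(x,y)\in D'_{ij}$ and invoking \eqref{wij} yields, both via direct inclusion and by using the surjectivity of $u_i$ and $v_j$ onto $D_{ij}$, the required identity $\Gamma f|_{D_{ij}}=W_{ij}(\Gamma f|_{D'_{ij}})$ for every $(i,j)$. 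The step I expect to be the main obstacle is the well-definedness of $Tf$: because each $D'_{ij}$ typically spans several sub-rectangles of the original grid, the piecewise formula must be consistent on every interior edge and corner of the refined mesh, and a careful bookkeeping is needed to see that the matchable hypotheses \eqref{eq: match-cond-1}--\eqref{eq: match-cond-2} are exactly what is required to guarantee this.
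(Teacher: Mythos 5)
Your proposal is correct and follows essentially the same route as the paper: the Read--Bajraktarevi\'c operator $T$ on the complete space $C^{*}(I\times J)$, well-definedness on shared edges via the matchable conditions, the interpolation property via \eqref{eq: Fij-cond-1}, contractivity via \eqref{eq: Fij-cond-2}, and the Banach fixed point theorem, followed by the translation of $Tf=f$ into $\Gamma f|_{D_{ij}}=W_{ij}(\Gamma f|_{D'_{ij}})$. The paper additionally makes explicit the converse step for uniqueness (any $\widetilde{f}$ whose graph pieces form the invariant set of $W$ satisfies $T\widetilde{f}=\widetilde{f}$), which your surjectivity remark already covers in substance.
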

\begin{proof}
Let $C^{*}(I\times J)$ be the collection of all continuous functions $\varphi$ on $I\times J$ satisfying $\varphi(x_{i},y_{j})=z_{ij}$ for all $(i,j)\in\Sigma_{N,0}\times\Sigma_{M,0}$. Define $T:\, C^{*}(I\times J)\to C^{*}(I\times J)$ as follows: given $\varphi\in C^{*}(I\times J)$,
\begin{align}\label{eq:T-def}
  T\varphi(x,y)=F_{ij}\big(u^{-1}_{i}(x),v^{-1}_{j}(y),\varphi(u^{-1}_{i}(x),v^{-1}_{j}(y))\big),\qquad (x,y)\in D_{ij},
\end{align}
for all $(i,j)\in\Sigma_{N}\times\Sigma_{M}$.

Given  $\varphi\in C^{*}(I\times J)$,  it is clear that for all $(i,j)\in\Sigma_N\times\Sigma_M$, $T\varphi|_{int(D_{ij})}$ is continuous, where we use $int(E)$ to denote the interior of a subset $E$ of $\bR^2$. For all $(x_{p},y_{q})\in D_{ij}$,
\begin{align*}
         &F_{ij}(u^{-1}_{i}(x_{p}),v^{-1}_{j}(y_{q}),\varphi(u^{-1}_{i}(x_{p}),v^{-1}_{j}(y_{q})))\\
               =\, &F_{ij}(x^\prime_{p},y^\prime_{q},\varphi(x^\prime_{p},y^\prime_{q}))
               =F_{ij}(x^\prime_{p},y^\prime_{q},z'_{pq})=z_{pq}
\end{align*}
so that $T\varphi(x_p,y_q)=z_{pq}$.
Furthermore, from matchable conditions, we know that $T\varphi$ is well defined on the boundary of $I_{i}\times J_{j}$ for all $(i,j)\in\Sigma_{N}\times\Sigma_{M}$. Thus $T:\, C^{*}(I\times J)\to C^{*}(I\times J)$ is well defined.

For any $\varphi\in C^{*}(I\times J)$, we define $|\varphi|_{\infty}=\max\{\varphi(x,y):\,(x,y)\in I\times J\}$. From \eqref{eq: Fij-cond-2}, we can easily see that $T$ is contractive on the complete metric space $(C^{*}(I\times J),|\cdot|_{\infty})$. Thus there exists a unique function $f\in C^{*}(I\times J)$ such that $Tf=f$, that is,
\begin{equation}\label{eq; f-recu-prop-proof}
f(x,y)=F_{ij}\big(u_{i}^{-1}(x),v_{j}^{-1}(y),f(u_{i}^{-1}(x),v_{j}^{-1}(y))\big),\qquad (x,y)\in D_{ij},
\end{equation}
for all $(i,j)\in\Sigma_{N}\times\Sigma_{M}$. Combining this with \eqref{wij}, we know that for all $(i,j)\in\Sigma_{N}\times\Sigma_{M}$,
\begin{align*}
\Gamma f|_{D_{ij}}&=\{(x,y,f(x,y)):\,(x,y)\in D_{ij}\}\\
           &=\{(x,y,F_{ij}\big(u_{i}^{-1}(x),v_{j}^{-1}(y),f(u_{i}^{-1}(x),v_{j}^{-1}(y))\big)):\,(x,y)\in D_{ij}\}\\
           &=\{(u_{i}(x),v_{j}(y),F_{ij}(x,y,f(x,y))):\,(x,y)\in D^\prime_{ij}\}\\
           &=\{W_{ij}(x,y,f(x,y)):\,(x,y)\in D^\prime_{ij}\}.
\end{align*}
Thus $( \Gamma f|_{D_{ij}})_{1\leq i\leq N, 1\leq j\leq M}$ is the invariant set of $W$.

Assume that there exists $\widetilde{f}\in C^{*}(I\times J)$, such that $( \Gamma \widetilde{f}|_{D_{ij}})_{1\leq i\leq N, 1\leq j\leq M}$ is the invariant set of $W$. Then we must have
\begin{equation*}
  F_{ij}(x,y,\widetilde{f}(x,y)) = \widetilde{f} (u_i(x),v_j(y)), \qquad \forall (x,y)\in D_{ij}^\prime,
\end{equation*}
so that $T\widetilde{f}=\widetilde{f}$. Since $T$ is contractive on $(C^{*}(I\times J),|\cdot|_{\infty})$, we know that $\widetilde{f}=f$.
\end{proof}


We call $f$ the \emph{recurrent fractal interpolation function} (RFIF) defined by $\{D'_{ij}\times \mathbb{R}, W_{ij}; (i,j)\in\Sigma_{N}\times\Sigma_{M}\}$, and call the graph of $f$ a \emph{recurrent fractal interpolation surface} (RFIS). From \eqref{eq; f-recu-prop-proof}, we have the following useful property: for all $(i,j)\in \Sigma_N\times \Sigma_M$,
\begin{equation}\label{eq: f-recu-prop}
  f(u_i(x),v_j(y)) = F_{ij} (x,y,f(x,y)) \quad \textrm{for all $(x,y)\in D_{ij}'$}.
\end{equation}

Generally, the recurrent IFS $\{D'_{ij}\times \mathbb{R}, W_{ij};(i,j)\in\Sigma_{N}\times\Sigma_{M}\}$ is not hyperbolic. However, we can still show that $\Gamma f=\{(x,y,f(x,y)):(x,y)\in I\times J\}$ is the attractor of the recurrent IFS. The spirit of the proof follows from \cite{Bar86,BEH89,RuanXu15}. For any two nonempty compact subsets $A$ and $B$ of $\mathbb{R}^{3}$, we define their Hausdorff metric by
\begin{equation*}
d_{H}(A,B)=\max\{\max_{x\in A}\min_{y\in B} d(x,y),\max_{y\in B}\min_{x\in A} d(x,y)\},
\end{equation*}
where we use $d(\cdot ,\cdot)$ to denote the Euclidean metric. For any $A,B\in \widetilde{\SH}$ with $A=(A_{ij})_{1\leq i\leq N, 1\leq j\leq M}$ and $B=(B_{ij})_{1\leq i\leq N, 1\leq j\leq M}$, we define
\begin{equation*}
  \widetilde{d}_{H}(A,B)=\max\{d_{H}(A_{ij},B_{ij}):\, (i,j)\in\Sigma_{N}\times\Sigma_{M}\}.
\end{equation*}
It is well known that $(\widetilde{\SH},\widetilde{d}_{H})$ is a complete metric space. For details about the Hausdorff metric and the metric $\widetilde{d}_{H}$, please see \cite{Bar93,BEH89,Fal90}.
\begin{theo}
Let $f$ be the RFIF defined by $\{D'_{ij}\times \mathbb{R}, W_{ij}; (i,j)\in\Sigma_{N}\times\Sigma_{M}\}$. Then for any $A=(A_{ij})_{1\leq i\leq N, 1\leq j\leq M}\in \widetilde{\SH}$ with $A_{ij}\neq\emptyset$ for all $(i,j)\in \Sigma_N\times \Sigma_M$,
\begin{equation}\label{eq:Haud-Dist-conv}
  \lim_{n\to\infty}\widetilde{d}_{H}(W^{n}(A),( \Gamma f|_{D_{ij}})_{1\leq i\leq N, 1\leq j\leq M})=0,
\end{equation}
where $W^{0}(A)=A$ and $W^{n+1}(A)=W(W^{n}(A))$ for any $n\geq0$.
\end{theo}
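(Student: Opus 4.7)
The plan is to bound $\widetilde{d}_H(W^n(A),(\Gamma f|_{D_{ij}}))$ by treating the two directions of the Hausdorff distance separately, exploiting the $z$-contractivity of the $F_{ij}$ and the invariance $W^n((\Gamma f|_{D_{ij}}))=(\Gamma f|_{D_{ij}})$ furnished by Theorem~\ref{theo: general-RFIF}. First I would establish uniform boundedness of the iterates. Setting $\alpha=\max_{ij}\alpha_{ij}<1$ and $K=\max_{ij}\sup_{(x,y)\in D'_{ij}}|F_{ij}(x,y,0)|$, the estimate $|F_{ij}(x,y,z)|\le K+\alpha|z|$ implied by \eqref{eq: Fij-cond-2} shows by induction that every $W^n(A)_{ij}$ lies in the common compact box $D_{ij}\times[-R,R]$ for $R$ large enough, and that $\Gamma f|_{D_{ij}}$ lies there too. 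On $D'_{ij}\times[-R,R]$ each $F_{ij}$ is uniformly continuous; let $\omega$ be a common modulus of continuity, and let $c<1$ be a common Lipschitz constant for the $u_i$ and $v_j$. Any $p\in W^n(A)_{ij}$ then has the form $p=W_{i_0 j_0}\circ W_{i_1 j_1}\circ\cdots\circ W_{i_{n-1}j_{n-1}}(q)$ for some admissible path $(i_0,j_0)=(i,j)$, $D_{i_{s+1}j_{s+1}}\subset D'_{i_s j_s}$, and some $q\in A_{i_n j_n}$; by the invariance of $f$'s graph under $W$, points of $\Gamma f|_{D_{ij}}$ admit the analogous decomposition with $q$ replaced by a point of $\Gamma f|_{D_{i_n j_n}}$.

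For the forward direction, given $p$ coming from $q=(x_0,y_0,z_0)$, I would set $q^*=(x_0,y_0,f(x_0,y_0))$ and $p^*=W_{i_0 j_0}\circ\cdots\circ W_{i_{n-1}j_{n-1}}(q^*)\in\Gamma f|_{D_{ij}}$. Since the $W_{i_s j_s}$ act on $(x,y)$ only through $u_{i_s},v_{j_s}$, the $(x,y)$-coordinates of $p$ and $p^*$ coincide, and iterating the one-step $z$-contraction in \eqref{eq: Fij-cond-2} gives $|z_p-z_{p^*}|\le \alpha^n|z_0-f(x_0,y_0)|\le 2R\alpha^n$. This yields the uniform bound $\sup_{p\in W^n(A)_{ij}} d(p,\Gamma f|_{D_{ij}})\le 2R\alpha^n\to 0$ in $(i,j)$.

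For the reverse direction, given $p^*\in\Gamma f|_{D_{ij}}$, I would write $p^*=W_{i_0 j_0}\circ\cdots\circ W_{i_{n-1}j_{n-1}}(\xi_n,\eta_n,f(\xi_n,\eta_n))$ for some admissible path and some $(\xi_n,\eta_n)\in D_{i_n j_n}$, pick any $q'=(\xi'_n,\eta'_n,\zeta'_n)\in A_{i_n j_n}$, and form $p=W_{i_0 j_0}\circ\cdots\circ W_{i_{n-1}j_{n-1}}(q')\in W^n(A)_{ij}$. The $(x,y)$-parts of $p$ and $p^*$ differ by at most $c^n\diam(I\times J)$. Writing $(\xi_k,\eta_k,\zeta_k)$ and $(\xi'_k,\eta'_k,\zeta'_k)$ for the intermediate values after $n-k$ compositions, the splitting $|\zeta_{k-1}-\zeta'_{k-1}|\le\alpha|\zeta_k-\zeta'_k|+\omega(|\xi_k-\xi'_k|+|\eta_k-\eta'_k|)$ together with $|\xi_k-\xi'_k|,|\eta_k-\eta'_k|\le c^{n-k}\diam(I\times J)$ telescopes into $|z_p-z_{p^*}|\le 2R\alpha^n+\sum_{k=1}^n\alpha^{k-1}\omega(2c^{n-k}\diam(I\times J))$, which vanishes as $n\to\infty$ (split the sum at an index $m_0$ beyond which $\omega(2c^{m}\diam(I\times J))$ is arbitrarily small, and on the finitely many early terms use $\alpha^{n-1-m}\to 0$). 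The main obstacle lies precisely in this backward estimate: because $A_{i_n j_n}$ need not project onto $D_{i_n j_n}$, the $(x,y)$-discrepancy has to be pushed through the merely uniformly continuous (not necessarily Lipschitz) dependence of $F_{ij}$ on $(x,y)$, which is exactly what forces the combination of the $u_i,v_j$-contractions with the $\alpha$-contraction in $z$ in the telescoped bound above.
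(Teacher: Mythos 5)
Your proposal is correct, and the forward half (matching a point of $W^n(A)_{ij}$ to the graph by replacing the seed $q$ with $(x_0,y_0,f(x_0,y_0))$ and contracting vertically by $\alpha^n$) is essentially the paper's estimate $\Delta_n\le\alpha\Delta_{n-1}$ in unrolled form. Where you genuinely diverge is the reverse inclusion. The paper introduces the intermediate set $\Omega^n_{ij}=\{(x,y,f(x,y)):(x,y)\in L^n(A_{XY})\cap D_{ij}\}$, the graph of $f$ over the projection of $W^n(A)$: since $\Omega^n_{ij}$ and $(W^n(A))_{ij}$ have the same projection, $d_H(\Omega^n_{ij},(W^n(A))_{ij})\le\Delta_n$, and $d_H(\Omega^n_{ij},\Gamma f|_{D_{ij}})\to 0$ follows from contractivity of the planar map $L$ on $\mathcal{H}(I\times J)$ (which forces $L^n(A_{XY})\to I\times J$) together with the uniform continuity of $f$ alone; a triangle inequality finishes. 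You instead decompose each graph point along an admissible path, shadow it by the orbit of an arbitrary point of $A_{i_nj_n}$, and absorb the horizontal discrepancy through the modulus of continuity of the $F_{ij}$ via the telescoped sum $\sum_k\alpha^{k-1}\omega(2c^{n-k}\diam(I\times J))$. Both are valid (your uniform-boundedness step supplies the compact box on which $\omega$ exists, and $A_{i_nj_n}\ne\emptyset$ is used exactly where the paper uses it, to guarantee the shadowing seed exists); the paper's route is slicker in that it never needs the $(x,y)$-regularity of the $F_{ij}$, only that of $f$, while yours is more self-contained at the price of the mixed-contraction estimate, which you handle correctly.
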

\begin{proof}
For any $A=(A_{ij})_{1\leq i\leq N, 1\leq j\leq M}\in \widetilde{\SH}$, we define $A_{XY}$ to be the projection of $A$ onto $I\times J$, i.e.,
\begin{align*}
A_{XY}=\{(x,y):\,\mbox{there exists } (i,j)\in \Sigma_N\times \Sigma_M \ \mbox{and}\ z\in\mathbb{R}  \mbox{ with}\ (x,y,z)\in A_{ij}\}.
\end{align*}
Let $\SH(I\times J)$ be the family of all nonempty compact subsets of $I\times J$. It is clear that $A_{XY}\in \SH(I\times J)$.

Define $L:\,\SH(I\times J)\to \SH(I\times J)$ by
\begin{align*}
L(U)=\bigcup_{\substack{1\leq i\leq N\\ 1\leq j\leq M}}\{(u_{i}(x),v_{j}(y)):\,(x,y)\in D_{ij}^{\prime}\cap U\},\quad U\in\SH(I\times J).
\end{align*}
It is clear that $L$ is contractive on $\SH(I\times J)$. Since $A_{ij}\neq\emptyset$ for all $(i,j)\in \Sigma_N\times \Sigma_M$, we have
\begin{align*}
\lim_{n\to\infty}d_{H}(L^{n}(A_{XY}),I\times J)=0.
\end{align*}
Let $\Omega_{ij}^{n}=\{(x,y,f(x,y)):\,(x,y)\in L^{n}(A_{XY})\cap D_{ij}\}$. Noticing that $f$ is uniformly continuous on $I\times J$, we have
\begin{align}\label{eq:Omega}
\lim_{n\to\infty}d_{H}(\Omega_{ij}^{n}, \Gamma f|_{D_{ij}})=0
\end{align}
for all $(i,j)\in \Sigma_N\times \Sigma_M$.

For $n\geq 1$, we define
\begin{align*}
\Delta_{n}=\sup\{|f(x,y)-z|:\,(x,y,z)\in(W^{n}(A))_{ij} \mbox{ for some } (i,j)\in \Sigma_N\times \Sigma_M\}.
\end{align*}
By the definition of Hausdorff metric, we have $d_{H}(\Omega^{n}_{ij},(W^{n}(A))_{ij})\leq \Delta_{n}$ for all $(i,j)\in \Sigma_N\times \Sigma_M$ and $n\geq 1$.

Given $(i,j)\in \Sigma_N\times \Sigma_M$ and $n\geq 1$, for any $(x,y,z)\in (W^{n}(A))_{ij}$, there exist $(k,\ell)\in \Sigma_N\times \Sigma_M$ and $(x^*,y^*,z^*)\in (W^{n-1}(A))_{k\ell}$ such that
\begin{align*}
(x,y,z)=(u_{i}(x^*),v_{j}(y^*),F_{ij}(x^*,y^*,z^*)).
\end{align*}
Let $\alpha=\max\{\alpha_{ij}:\,(i,j)\in \Sigma_N\times \Sigma_M\}$. By \eqref{eq: Fij-cond-2} and \eqref{eq: f-recu-prop},
 \begin{align*}
|f(x,y)-z|&=|F_{ij}(x^*,y^*,f(x^*,y^*))-F_{ij}(x^*,y^*,z^*))|\\
          &\leq \alpha|f(x^*,y^*)-z^*|\leq \alpha \Delta_{n-1}
\end{align*}
so that $\Delta_{n}\leq\alpha\Delta_{n-1}$. Since $0<\alpha<1$, we have $\lim\limits_{n\to\infty}\Delta_{n}=0$. Thus
\begin{align*}
\lim_{n\to\infty}d_{H}(\Omega^{n}_{ij},(W^{n}(A))_{ij})=0
\end{align*}
for all $(i,j)\in \Sigma_N\times \Sigma_M$. Combining this with \eqref{eq:Omega}, we have
\begin{align*}
\lim_{n\to\infty}d_{H}((W^{n}(A))_{ij}, \Gamma f|_{D_{ij}})=0
\end{align*}
for all $(i,j)\in \Sigma_N\times \Sigma_M$. This completes the proof of the theorem.
\end{proof}

\begin{rem}
  The assumption ``$A_{ij}\neq\emptyset$ for all $(i,j)\in \Sigma_N\times \Sigma_M$" in the above theorem can be weaken.
  In fact, from the proof, we can see that the theorem still holds if we only require that  $\lim_{n\to\infty}d_{H}(L^{n}(A_{XY}),I\times J)=0$.
\end{rem}

\section{Construction of Bilinear RFISs}
\label{sec:bilinear}

Let $h(x,y), S(x,y)$ be two continuous functions on $I\times J$ satisfying
\begin{align}
  h(x_{i},y_{j})=z_{ij},  \qquad (i,j)\in\Sigma_{N,0}\times\Sigma_{M,0}, \quad \textrm{and}\label{eq:h-cond}
\end{align}
$$\max\{|S(x,y)|:(x,y)\in I\times J\}<1.$$

Let $g_{ij}(x,y), (i,j)\in \Sigma_N\times \Sigma_M$ be continuous functions on $D'_{ij}$ satisfying
\begin{equation}\label{eq:gij-cond}
  g_{ij}(x'_{k},y'_{\ell})=z'_{k\ell}, \qquad (k,\ell)\in\{i-1,i\}\times\{j-1,j\}.
\end{equation}

For $(i,j)\in\Sigma_{N}\times\Sigma_{M}$, we define functions $F_{ij}(x,y,z):\, D'_{ij}\times \mathbb{R}\to \mathbb{R}$ by
\begin{equation}\label{eq: Fij-def}
  F_{ij}(x,y,z)=S(u_{i}(x),v_{j}(y))(z-g_{ij}(x,y))+h(u_{i}(x),v_{j}(y)),
\end{equation}
where $u_i\in C(I_i^\prime)$ and $v_j\in C(J_j^\prime)$ are functions defined in Section~2. Then for all $(i,j)\in \Sigma_N\times \Sigma_M$ and $(k,\ell)\in\{i-1,i\}\times\{j-1,j\}$,
\begin{equation*}
  F_{ij}(x'_k,y'_\ell,z'_{k\ell})=h(u_i(x'_k),v_j(y'_\ell))=h(x_k,y_\ell)=z_{k\ell}
\end{equation*}
so that \eqref{eq: Fij-cond-1} holds. Furthermore, it follows from $\max\{|S(x,y)|:(x,y)\in I\times J\}<1$ that \eqref{eq: Fij-cond-2} also holds.

Given $i\in \Sigma_{N-1}$, let $x^{*}=u_{i}^{-1}(x_{i})=u_{i+1}^{-1}(x_{i})$. Given $j\in\Sigma_{M}$, for all $y\in J'_{j}$ and $z\in \mathbb{R}$, we have
\begin{align*}
  & F_{ij}(x^{*},y,z)=S(x_i,v_j(y))(z-g_{ij}(x^*,y))+h(x_i,v_j(y)), \\
  & F_{i+1,j}(x^{*},y,z)=S(x_i,v_j(y))(z-g_{i+1,j}(x^*,y))+h(x_i,v_j(y)).
\end{align*}
Thus, the matchable condition \eqref{eq: match-cond-1} holds if $g_{ij}(x^*,y)=g_{i+1,j}(x^*,y)$ for all $y\in J_j^\prime$. Similarly, the matchable condition \eqref{eq: match-cond-2} holds if $g_{ij}(x,y^*)=g_{i,j+1}(x,y^*)$ for all $x\in I_i^\prime$. Thus, from Theorem~\ref{theo: general-RFIF}, we have the following result. We remark that Metzler and Yun~\cite{MeYu10}, and Yun, Choi and O \cite{YCO15} obtained similar results.



\begin{theo}\label{theo:gij}
Let $F_{ij}, (i,j)\in\Sigma_{N}\times\Sigma_{M}$ be defined as in \eqref{eq: Fij-def}.  Assume that the following matchable conditions hold:
\begin{itemize}
  \item[(1).] for all $i\in \Sigma_{N-1}, j\in\Sigma_{M}$, and $x^{*}=u_{i}^{-1}(x_{i})=u_{i+1}^{-1}(x_{i})$,
    \begin{equation}\label{eq: gij-match-cond-1}
      g_{ij}(x^{*},y)=g_{i+1,j}(x^{*},y), \quad  \forall y\in J'_{j}, \quad \mbox{and}
    \end{equation}
  \item[(2).] for all $i\in\Sigma_{N}, j\in \Sigma_{M-1}$, and $y^{*}=v_{j}^{-1}(y_{j})=v_{j+1}^{-1}(y_{j})$,
    \begin{equation}\label{eq: gij-match-cond-2}
      g_{ij}(x,y^{*})=g_{i,j+1}(x,y^{*}),  \quad \forall x\in I'_{i}.
    \end{equation}
\end{itemize}
Then there exists a unique continuous function $f$ on $I\times J$, such that $( \Gamma f|_{D_{ij}})_{1\leq i\leq N, 1\leq j\leq M}$ is the invariant set of $W$ with $f(x_{i},y_{j})=z_{ij}$ for all $i\in\Sigma_{N,0}$ and $j\in\Sigma_{M,0}$.
\end{theo}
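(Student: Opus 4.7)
The plan is to reduce this to a direct application of Theorem~\ref{theo: general-RFIF}. The construction \eqref{eq: Fij-def} was tailored precisely so that the four hypotheses of Theorem~\ref{theo: general-RFIF} --- continuity of each $F_{ij}$, the interpolation identity \eqref{eq: Fij-cond-1}, the $z$-Lipschitz bound \eqref{eq: Fij-cond-2}, and the two matchable conditions \eqref{eq: match-cond-1}--\eqref{eq: match-cond-2} --- become either automatic or reduce cleanly to hypotheses on the simpler data $(h,S,g_{ij})$. Most of this reduction has in fact already been carried out in the paragraphs immediately preceding the statement, so the ``proof'' is essentially a bookkeeping step that collects those computations and invokes Theorem~\ref{theo: general-RFIF}.

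Concretely, first I would set $\alpha_{ij}=\max\{|S(x,y)|:(x,y)\in I\times J\}$, which by assumption is strictly less than $1$. Continuity of $F_{ij}$ on $D'_{ij}\times\mathbb{R}$ is immediate from continuity of $S$, $h$, $g_{ij}$, $u_i$, $v_j$. The interpolation condition \eqref{eq: Fij-cond-1} follows from substituting $(x,y,z)=(x'_k,y'_\ell,z'_{k\ell})$ into \eqref{eq: Fij-def}: condition \eqref{eq:gij-cond} makes the factor $z-g_{ij}(x,y)$ vanish, and condition \eqref{eq:h-cond} together with $u_i(x'_k)=x_k$, $v_j(y'_\ell)=y_\ell$ yields $F_{ij}(x'_k,y'_\ell,z'_{k\ell})=h(x_k,y_\ell)=z_{k\ell}$. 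The Lipschitz bound \eqref{eq: Fij-cond-2} is even easier: the only $z$-dependence in \eqref{eq: Fij-def} is linear with coefficient $S(u_i(x),v_j(y))$, whose absolute value is at most $\alpha_{ij}$.

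For the matchable conditions, I would plug $x=x^{*}$ into \eqref{eq: Fij-def} for both $F_{ij}$ and $F_{i+1,j}$. Because $u_i(x^*)=u_{i+1}(x^*)=x_i$, the factors $S(u_\bullet(x^*),v_j(y))=S(x_i,v_j(y))$ and $h(u_\bullet(x^*),v_j(y))=h(x_i,v_j(y))$ coincide, so the identity \eqref{eq: match-cond-1} reduces exactly to the hypothesis $g_{ij}(x^*,y)=g_{i+1,j}(x^*,y)$ on $J'_j$, namely \eqref{eq: gij-match-cond-1}. The same calculation with the roles of $x$ and $y$ swapped, using $v_j(y^*)=v_{j+1}(y^*)=y_j$, shows that \eqref{eq: gij-match-cond-2} implies \eqref{eq: match-cond-2}.

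All hypotheses of Theorem~\ref{theo: general-RFIF} are thereby verified, and the conclusion --- existence and uniqueness of the continuous interpolant $f$ whose graph pieces form the invariant set of $W$ --- is delivered directly. I do not anticipate any real obstacle: the heavy lifting (the contraction argument on $C^*(I\times J)$ and the well-definedness on shared boundaries) already lives inside Theorem~\ref{theo: general-RFIF}, and the present theorem only records the observation that the bilinear template \eqref{eq: Fij-def} satisfies its hypotheses under the stated compatibility of the $g_{ij}$'s. The one mild point to keep straight is the cancellation $u_i(x^*)=u_{i+1}(x^*)$ (and analogously for $v_j$), which is what makes the $S$- and $h$-terms irrelevant to the matching and isolates the conditions on $g_{ij}$.
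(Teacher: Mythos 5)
Your proposal is correct and follows exactly the paper's route: the computations verifying \eqref{eq: Fij-cond-1}, \eqref{eq: Fij-cond-2}, and the reduction of the matchable conditions \eqref{eq: match-cond-1}--\eqref{eq: match-cond-2} to \eqref{eq: gij-match-cond-1}--\eqref{eq: gij-match-cond-2} via $u_i(x^*)=u_{i+1}(x^*)=x_i$ are precisely those carried out in the paragraphs preceding the theorem, after which the paper likewise just invokes Theorem~\ref{theo: general-RFIF}. No gaps.
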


We remark that from \eqref{eq:gij-cond}, it is easy to check that for all $i\in \Sigma_{N-1}$, $j\in \Sigma_M$, and $x^{*}=u_{i}^{-1}(x_{i})=u_{i+1}^{-1}(x_{i})$,
\begin{equation}\label{eq:gij-prop-1}
  g_{ij}(x^*,y'_{j-1})=g_{i+1,j}(x^*,y'_{j-1}), \quad g_{ij}(x^*,y'_j)=g_{i+1,j}(x^*,y'_j).
\end{equation}
Similarly, for all $i\in\Sigma_{N}, j\in \Sigma_{M-1}$, and $y^{*}=v_{j}^{-1}(y_{j})=v_{j+1}^{-1}(y_{j})$,
\begin{equation}\label{eq:gij-prop-2}
  g_{ij}(x'_{i-1},y^*)=g_{i,j+1}(x'_{i-1},y^*), \quad g_{ij}(x'_{i},y^*)=g_{i,j+1}(x'_{i},y^*).
\end{equation}
However, the match conditions in Theorem~\ref{theo:gij} may not be satisfied if we only require that \eqref{eq:gij-prop-1} and \eqref{eq:gij-prop-2} hold.

Now we want to construct bilinear RFISs which are special RFISs in the above theorem. The basic idea is similar to that in \cite{BaMa15,RuanXu15}.  For all $i\in \Sigma_N$ and $j\in\Sigma_M$, we define $u_i$ and $v_j$ to be linear functions satisfying  \eqref{eq:ui-cond} and \eqref{eq:vj-cond}. We also define $g_{ij}$ to be the bilinear function satisfying \eqref{eq:gij-cond}. That is, if we denote
\begin{equation*}
  \lambda_i(x)=\frac{x'_{i}-x}{x'_{i}-x'_{i-1}}, \qquad \mu_j(y)=\frac{y'_{j}-y}{y'_{j}-y'_{j-1}},
\end{equation*}
then for all $(x,y)\in D_{ij}^\prime$,
\begin{align*}
  g_{ij}(x,y)=&\lambda_i(x) \mu_j(y) z'_{i-1,j-1} + (1-\lambda_i(x)) \mu_j(y) z'_{i,j-1} \\
  &+\lambda_i(x)(1- \mu_j(y)) z'_{i-1,j} + (1-\lambda_i(x))(1- \mu_j(y)) z'_{i,j}.
\end{align*}
Notice that once $x$ is fixed, $g_{ij}(x,y)$ is a linear function of $y$. Combining this with \eqref{eq:gij-prop-1}, the matchable condition \eqref{eq: gij-match-cond-1} in Theorem~\ref{theo:gij} is satisfied. Similarly, \eqref{eq: gij-match-cond-2} is also satisfied. Thus, in the case that $g_{ij}$ are bilinear for all $i,j$, we do generate RFIF and RFIS.

In order to construct bilinear RFISs, we define $h:\, I\times J\to \mathbb{R}$ to be the function satisfying \eqref{eq:h-cond} and $h|_{I_i\times J_j}$ is bilinear for all $(i,j)\in\Sigma_N\times \Sigma_M$.

Let $\{s_{ij}:\, (i,j)\in \Sigma_{N,0}\times \Sigma_{M,0}\}$ be a given subset of $\mathbb{R}$ with $|s_{ij}|<1$ for all $i,j$. We define $S:\, I\times J\to \mathbb{R}$ to be the function such that $S|_{I_i\times J_i}$ is bilinear for all $(i,j)\in \Sigma_N\times \Sigma_M$ and
\begin{equation*}
  S(x_i,y_j) = s_{ij}, \qquad \forall (i,j)\in \Sigma_{N,0}\times \Sigma_{M,0}.
\end{equation*}

For all $(i,j)\in\Sigma_N\times \Sigma_M$, we define $F_{ij}:\, D_{ij}^\prime\times \mathbb{R}\to \mathbb{R}$ by \eqref{eq: Fij-def}. Then the RFIF $f$ is called a \emph{bilinear RFIF}. We call $\Gamma f$ a \emph{bilinear RFIS}. The function $S(x,y)$ is called the \emph{vertical scale factor function} of $f$. We also call $s_{ij}, (i,j)\in \Sigma_{N,0}\times \Sigma_{M,0}$ \emph{vertical scaling factors} of $f$ if there is no confusion.

From the construction, we know that a bilinear RFIS is determined by interpolation points $\{(x_i,y_j,z_{i,j})\}_{(i,j)\in \Sigma_{N,0}\times \Sigma_{M,0}}$, vertical scaling factors $\{s_{ij}:\, (i,j)\in \Sigma_{N,0}\times \Sigma_{M,0}\}$, and domain points $\{(x_i^\prime,y_j^\prime)\}_{(i,j)\in \Sigma_{N,0}\times \Sigma_{M,0}}$. This property is similar to the linear recurrent FIF in the one-dimensional case \cite{BEH89}. In particular, a bilinear RFIS is easy to be generated, while there are no restrictions on interpolation points and vertical scaling factors.

\section{Box Dimension of Bilinear RFISs}
For any $k_1, k_2, k_3\in\mathbb{Z}$ and $\varepsilon>0$, we call $\Pi_{i=1}^{3}[k_i\varepsilon,(k_i+1)\varepsilon]$ an
$\varepsilon$-coordinate cube in $\mathbb{R}^3$. Let $E$ be a bounded set in $\mathbb{R}^3$ and $\mathcal{N}_E(\varepsilon)$ the number of
$\varepsilon$-coordinate cubes intersecting $E$. We define
\begin{equation}\label{eq:box-dim-def}
  \overline{\dim}_{B} E=\varlimsup_{\gep\to 0+}\frac{\log \mathcal{N}_{E}(\gep)}{\log1/\gep}\quad\text{and}\quad  \underline{\dim}_{B} E=\varliminf_{\gep\to 0+}\frac{\log \mathcal{N}_{E}(\gep)}{\log1/\gep},
\end{equation}
and call them the \emph{upper box dimension} and the \emph{lower box dimension} of $E$, respectively.
If $\overline{\dim}_{B} E=\underline{\dim}_{B} E$, then we use  $\dim_B E$ to denote the common value and  call it the \emph{box dimension} of $E$.
It is easy to see  that in the definition of the upper and lower box dimensions, we can only consider $\gep_n=\frac{1}{K^nN}$, where $n\in \mathbb{Z}^+$. That is,
\begin{equation}\label{eq:box-dim-def2}
  \overline{\dim}_{B} E=\varlimsup_{n \to \infty}\frac{\log \mathcal{N}_{E}(\gep_{n})}{n\log K}\quad\text{and}\quad  \underline{\dim}_{B} E=\varliminf_{n\to \infty}\frac{\log \mathcal{N}_{E}(\gep_{n})}{n\log K}.
\end{equation}
It is also well known that $\underline{\dim}_{B}E\geq 2$ when $E$ is the graph of a continuous function on a domain of $\mathbb{R}^{2}$. Please see ~\cite{Fal90} for details.

In this section, we will estimate the box dimension of $\Gamma f$, where $f$ is the bilinear RFIF defined in the section~\ref{sec:bilinear}.
Without loss of generality, we can assume that $I=J=[0,1]$.

\subsection{A method to calculate the box dimension}
It is difficult to obtain the box dimension of general bilinear RFIS.
In this paper, we assume that
\begin{equation}\label{eq:hom-cond-1}
  M=N,  \quad\mbox{and } x_{i}=\frac{i}{N},  y_j=\frac{j}{N}, \quad \forall i,j\in\Sigma_{N,0}.
\end{equation}
Furthermore, we assume that there exists a positive integer $K\geq 2$ such that
 \begin{align}\label{eq:hom-cond-2}
   \frac{|I_i^\prime|}{|I_i|} =\frac{|J_j^\prime|}{|J_j|}=K, \qquad \forall i,j\in \Sigma_N,
 \end{align}
and for all $(i_1,j_1), (i_2, j_2) \in \Sigma_N\times \Sigma_N$, we have
  \begin{align}\label{eq:hom-cond-3}
   D_{i_1 j_1}^\prime = D_{i_2 j_2}^\prime \quad \mbox{or} \quad  int(D_{i_1 j_1}^\prime) \cap int(D_{i_2 j_2}^\prime) = \emptyset.
 \end{align}

For each $n\in \mathbb{Z}^+$ and $1\leq k,\ell\leq K^nN$, we denote
  $$D_{k\ell}^n = \left[\frac{k-1}{K^n N},\frac{k}{K^n N}\right] \times  \left[\frac{\ell-1}{K^n N},\frac{\ell}{K^n N}\right].$$
Given $n\in \bZp$ and $U\subset [0,1]^2$, we define
\begin{equation*}
  O(f,n,U) = \sum_{\substack{1\leq k,\ell\leq K^nN \\ D_{k\ell}^n\subset U}} O(f,D_{k\ell}^n),
\end{equation*}
where we use $O(f,E)$ to denote the oscillation of $f$ on $E\subset [0,1]^2$, that is,
\begin{equation*}
 O(f,E) = \sup\{f(\bfx^\prime)-f(\bfx^{\prime\prime}):\; \bfx^\prime,\bfx^{\prime\prime}\in E\}.
\end{equation*}
We also denote $O(f,n)=O(f,n,[0,1]^2)$ for simplicity.

We will use the following simple lemma, which presents a method to estimate the upper and lower box dimensions of the graph of a function from its oscillation. Similar results can be found in \cite{Fal90,KRZ18,RSY09}.

\begin{lem}\label{lem:box-dim}
Let $f$ be the bilinear RFIF defined in Section~3. Then
\begin{equation}\label{eq:lower-boxdim-est}
  \underline{\dim}_B(\Gamma f) \geq \max\Big\{2,1+\varliminf_{n\to\infty} \frac{\log O(f,n)}{n\log K} \Big\}, \quad \textrm{and}
\end{equation}
\begin{equation}\label{eq:upper-boxdim-est}
  \overline{\dim}_B(\Gamma f) \leq 1+\varlimsup_{n\to\infty} \frac{\log (O(f,n)+2K^nN)}{n\log K},
\end{equation}
where we define $\log0=-\infty$ according to the usual convention.
\end{lem}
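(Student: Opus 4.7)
The plan is to work with the sequence $\gep_n = 1/(K^n N)$ from~\eqref{eq:box-dim-def2}, chosen so that the base squares $D_{k\ell}^n$ coincide exactly with the two-dimensional $\gep_n$-coordinate cells of $[0,1]^2$. Each $\gep_n$-coordinate cube in $\bR^3$ projects to some such $D_{k\ell}^n$, so the count $\N_{\Gamma f}(\gep_n)$ decouples as a sum over the $(K^n N)^2$ base cells, and the lemma will fall out of a routine per-cell cube count combined with the definition of box dimension.

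First I would record the elementary per-cell estimate. Fix $(k,\ell)$; by continuity, $f(D_{k\ell}^n)$ is a closed interval of length $O(f,D_{k\ell}^n)$, and the $\gep_n$-cubes of the form $D_{k\ell}^n\times[m\gep_n,(m+1)\gep_n]$ meeting $\Gamma f$ correspond to a block of consecutive integers $m$. A direct interval-counting argument gives
\[
\max\bigl\{1,\; O(f,D_{k\ell}^n)/\gep_n\bigr\} \;\leq\; \#\{m\} \;\leq\; O(f,D_{k\ell}^n)/\gep_n + 2,
\]
the lower bound $1$ encoding that the graph is nonempty over the cell. Summing over all $(K^n N)^2$ base squares and recalling $1/\gep_n = K^nN$, this yields
\[
\max\bigl\{(K^n N)^2,\; K^n N\cdot O(f,n)\bigr\} \;\leq\; \N_{\Gamma f}(\gep_n) \;\leq\; K^n N\bigl(O(f,n) + 2K^n N\bigr).
\]

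To conclude, I take $\log$, divide by $n\log K$, and pass to the appropriate limit. Since $\log(K^n N)/(n\log K) = 1 + \log N/(n\log K) \to 1$, the factor $K^n N$ on the right contributes the ``$1+$'' in~\eqref{eq:upper-boxdim-est} and the residual $\log(O(f,n)+2K^nN)/(n\log K)$ produces the $\limsup$ term. For~\eqref{eq:lower-boxdim-est}, applying $\varliminf$ to each of the two terms inside the ``$\max$'' yields $2$ from $(K^nN)^2$ and $1+\varliminf\log O(f,n)/(n\log K)$ from the oscillation term, and the $\max$ in the conclusion matches the $\max$ in the estimate. There is no real obstacle; the only points requiring care are that the additive $2(K^n N)^2$ inside the logarithm, and the discrepancy between $\log\gep_n^{-1} = n\log K + \log N$ and $n\log K$, are both negligible, which follows from $\log N = o(n)$ and the factorization $K^nN(O(f,n)+2K^nN)$. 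The convention $\log 0 = -\infty$ handles the degenerate case $O(f,n)\equiv 0$ consistently with the stated bounds.
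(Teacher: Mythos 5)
Your proposal is correct and follows essentially the same route as the paper: decompose the cube count into columns over the grid cells $D_{k\ell}^n$ at scale $\gep_n=1/(K^nN)$, bound each column count between $O(f,D_{k\ell}^n)/\gep_n$ and $O(f,D_{k\ell}^n)/\gep_n+2$, and sum. The only cosmetic differences are that the paper passes to the covering number $\wdt{\N}$ for the upper bound and cites the general fact $\underline{\dim}_B\geq 2$ for continuous graphs, whereas you bound the coordinate-cube count directly and extract the $2$ from the $(K^nN)^2$ term; both are valid.
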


\begin{proof}
  It is clear that
  \begin{align}
    &\N_{\Gamma f}(\gep_n) \geq \gep_n^{-1}  \sum_{1\leq i,j\leq K^nN} O(f,D_{ij}^n)=\gep_n^{-1}O(f,n)
  \end{align}
  so that \eqref{eq:lower-boxdim-est} holds. On the other hand, we note that $\N_E(\gep)$ and $\N_E(\gep_n)$ can be replaced by $\wdt{\N}_E(\gep)$ and $\wdt{\N}_E(\gep_n)$ in \eqref{eq:box-dim-def} and \eqref{eq:box-dim-def2} respectively, where $\wdt{N}_E(\gep)$ is the smallest number of cubes of side $\gep$ that cover $E$ (see \cite{Fal90} for details). In our case,
  \begin{equation*}
    \wdt{N}_{\Gamma f} (\gep_n) \leq \sum_{1\leq i,j\leq K^nN} (\gep_n^{-1}O(f,D_{ij}^n)+2) = K^n N\big( O(f,n)+2K^nN\big)
  \end{equation*}
  so that \eqref{eq:upper-boxdim-est} holds.
\end{proof}



\begin{rem}
  From Lemma~\ref{lem:box-dim}, $\dim_B(\Gamma f) =2$ if $\varlimsup\limits_{n\to\infty} \frac{\log O(f,n)}{n\log K}\leq 1$, and $\dim_B(\Gamma f) =1+\lim\limits_{n\to\infty}\frac{\log O(f,n)}{n\log K}$ if the limit exists and is larger than $1$.
\end{rem}

\subsection{Compatible partitions and uniform sums}
The vertical scaling factors $\{s_{ij}:\, i,j\in \Sigma_{N,0}\}$ are called \emph{steady} if for each $(i,j)\in \Sigma_N\times \Sigma_N$, either all of $s_{i-1,j-1},s_{i-1,j},s_{i,j-1}$ and $s_{ij}$ are nonnegative or all of them are nonpositive.

Given $\SB=\{B_r\}_{r=1}^m$, where $B_1,\ldots,B_m$ are nonempty subsets of $[0,1]^2$, $\SB$ is called a \emph{partition} of $[0,1]^2$ if $\bigcup_{r=1}^m B_r=[0,1]^2$ and $int(B_r)\cap int(B_t)=\emptyset$ for all $r\not=t$. A partition $\SB=\{B_r\}_{r=1}^m$ of $[0,1]$ is called \emph{compatible} with respect to $\{D_{ij},D_{ij}^\prime;(i,j)\in \Sigma_N\times \Sigma_M\}$ if the following two conditions hold:
\begin{enumerate}
  \item for each $(i,j)\in \Sigma_N\times \Sigma_N$, there exist $r,t\in\{1,2,\ldots,m\}$, such that $D_{ij}\subset B_r$ and $D_{ij}^\prime\subset B_t$, and
 \item  Assume that $r,t\in \{1,2,\ldots,m\}$. If there exists $(i,j)\in \Sigma_N\times \Sigma_N$, such that $D_{ij}\subset B_r$ and $D'_{ij}\subset B_t$, then
 \[
   B_t = \bigcup \{D'_{k\ell}:\, D_{k\ell}\subset B_r, D'_{k\ell}\subset B_t\}.
 \]
\end{enumerate}

  For $1\leq r,t\leq m$, we denote
\begin{align*}
  & \Lambda_r=\{(i,j)\in \Sigma_N\times \Sigma_N:\, D_{ij}\subset B_r\},\\
  & \Lambda_t^\prime=\{(i,j)\in \Sigma_N\times \Sigma_N:\, D_{ij}^\prime\subset B_t\},
\end{align*}
We remark that the second condition is equivalent to
 \[
   B_t = \bigcup_{(i,j)\in \Lambda_r\cap \Lambda_t^\prime} D_{ij}^\prime, \quad \mbox{ if } \Lambda_{r}\cap \Lambda_t^\prime\not=\emptyset.
 \]








Given $1\leq r\leq m$ and $\alpha,\beta\in \{0,1,\ldots,N-K\}$, we denote
 $$\Lambda_r(\alpha,\beta)= \{(i,j)\in \gL_r:\, D_{ij}^\prime=[x_\alpha,x_{\alpha+K}] \times [y_\beta,y_{\beta+K}]\}.$$
We say that the vertical scaling factors $\{s_{ij}:\, i,j\in \Sigma_{N,0}\}$ \emph{have uniform sums} under a compatible partition $\SB=\{B_r\}_{r=1}^m$ if for all $r,t\in \{1,2,\ldots,m\}$ with $\Lambda_r \cap \Lambda_t^\prime\not=\emptyset$, there exists a constant $\gamma_{rt}$, such that
\begin{align*}
   \gamma_{rt} & =\sum_{(i,j)\in \Lambda_r(\alpha,\beta)} |S(u_i(x_{\alpha}),v_{j}(y_{\beta}))|
                 =\sum_{(i,j)\in \Lambda_r(\alpha,\beta)} |S(u_i(x_\alpha),v_{j}(y_{\beta+K}))|   \\
                & =\sum_{(i,j)\in \Lambda_r(\alpha,\beta)} |S(u_i(x_{\alpha+K}),v_{j}(y_{\beta}))|
                 =\sum_{(i,j)\in \Lambda_r(\alpha,\beta)} |S(u_i(x_{\alpha+K}),v_{j}(y_{\beta+K}))|
\end{align*}
for all $\alpha,\beta\in \{0,1,\ldots,N-K\}$ with $[x_\alpha,x_{\alpha+K}] \times [y_\beta,y_{\beta+K}]\in \{D_{ij}^\prime:\, (i,j)\in \gL_r\cap \gL'_t\}$.
In this case, we also call $\{\gamma_{rt}:\, \Lambda_r\cap \Lambda_t^\prime\not=\emptyset\}$ the \emph{uniform sums} of vertical scaling factors.

\begin{lem}\label{lem:vertScal}
  Assume that the vertical scaling factors $\{s_{ij}:\, i,j\in \Sigma_{N,0}\}$ are steady and have uniform sums $\{\gamma_{rt}:\, \Lambda_r\cap \Lambda_t^\prime\not=\emptyset\}$ under a compatible partition $\SB=\{B_r\}_{r=1}^m$.  Then for all $r,t\in \{1,2,\ldots,m\}$ with $\Lambda_r \cap \Lambda_t^\prime\not=\emptyset$, and all $\alpha,\beta\in \{0,1,\ldots,N-K\}$ with $[x_\alpha,x_{\alpha+K}] \times [y_\beta,y_{\beta+K}]\in \{D_{ij}^\prime:\, (i,j)\in \gL_r\cap \gL'_t\}$, we have
  \begin{equation*}
    \sum_{(i,j)\in \Lambda_r(\alpha,\beta)} |S(u_i(x),v_{j}(y))|=\gamma_{rt}, \quad \forall (x,y)\in [x_\alpha,x_{\alpha+K}] \times [y_\beta,y_{\beta+K}].
  \end{equation*}
\end{lem}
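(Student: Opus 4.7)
The plan is to show that $\sum_{(i,j)\in \Lambda_r(\alpha,\beta)} |S(u_i(x), v_j(y))|$ is a bilinear function of $(x,y)$ on $[x_\alpha, x_{\alpha+K}]\times[y_\beta, y_{\beta+K}]$, and then invoke the fact that a bilinear function on a rectangle is uniquely determined by its four corner values: since the uniform sums hypothesis already fixes those four values to the common constant $\gamma_{rt}$, the sum must equal $\gamma_{rt}$ identically.

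First I would unpack bilinearity of each summand. Fix $(i,j)\in \Lambda_r(\alpha,\beta)$. By the construction of bilinear RFISs, $u_i:I_i'\to I_i$ and $v_j:J_j'\to J_j$ are linear, and $S|_{I_i\times J_j}$ is bilinear. Since $[x_\alpha,x_{\alpha+K}]=I_i'$ and $[y_\beta,y_{\beta+K}]=J_j'$ (this is precisely what $(i,j)\in \Lambda_r(\alpha,\beta)$ asserts), the point $(u_i(x),v_j(y))$ stays inside the single cell $I_i\times J_j$, so $S(u_i(x),v_j(y))=A_{ij}+B_{ij}x+C_{ij}y+D_{ij}xy$ for suitable constants. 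Thus $S\circ(u_i,v_j)$ is bilinear in $(x,y)$ on the whole rectangle $[x_\alpha,x_{\alpha+K}]\times[y_\beta,y_{\beta+K}]$.

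Next I would use steadiness to remove the absolute value. On any cell $I_i\times J_j$, bilinear interpolation gives
\[
  S(x,y)=\lambda\mu\,s_{i-1,j-1}+(1-\lambda)\mu\,s_{i,j-1}+\lambda(1-\mu)\,s_{i-1,j}+(1-\lambda)(1-\mu)\,s_{ij},
\]
with $\lambda,(1-\lambda),\mu,(1-\mu)\in[0,1]$; if the four scaling factors are all nonnegative (resp.\ nonpositive) then $S\ge 0$ (resp.\ $S\le 0$) on $I_i\times J_j$. Hence $|S||_{I_i\times J_j}=\pm S|_{I_i\times J_j}$ is bilinear too, so $(x,y)\mapsto |S(u_i(x),v_j(y))|$ is bilinear on $[x_\alpha,x_{\alpha+K}]\times[y_\beta,y_{\beta+K}]$. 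Summing over the finite set $\Lambda_r(\alpha,\beta)$ preserves bilinearity, so the function
\[
  \Phi(x,y):=\sum_{(i,j)\in \Lambda_r(\alpha,\beta)} |S(u_i(x),v_j(y))|
\]
is bilinear on that rectangle.

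Finally I would close the argument with the uniqueness principle for bilinear interpolation. The hypothesis of uniform sums states exactly that $\Phi$ takes the value $\gamma_{rt}$ at each of the four corners $(x_\alpha,y_\beta)$, $(x_\alpha,y_{\beta+K})$, $(x_{\alpha+K},y_\beta)$, $(x_{\alpha+K},y_{\beta+K})$. Since any bilinear function on a rectangle is uniquely determined by its values at the four vertices, and the constant function $\gamma_{rt}$ trivially agrees with $\Phi$ at these vertices, we conclude $\Phi\equiv \gamma_{rt}$ on the rectangle. The only delicate point—and the one I would double-check carefully—is the steadiness step: one must verify that the hypothesis on the $s_{ij}$ really forces $|S|$ to agree with $\pm S$ \emph{throughout} the cell, not merely at the corners; the bilinear interpolation formula above gives this for free because all four coefficients $\lambda\mu$, $(1-\lambda)\mu$, $\lambda(1-\mu)$, $(1-\lambda)(1-\mu)$ are nonnegative.
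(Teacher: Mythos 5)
Your proposal is correct and follows essentially the same route as the paper: show each summand $|S(u_i(x),v_j(y))|$ is bilinear on the rectangle (using linearity of $u_i,v_j$, bilinearity of $S$ on each cell, and steadiness to strip the absolute value), then conclude from agreement with the constant $\gamma_{rt}$ at the four corners. Your explicit verification that steadiness forces $S$ to have a constant sign throughout the cell, via the nonnegativity of the bilinear interpolation weights, is a detail the paper states without proof, but the argument is otherwise identical.
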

\begin{proof}
   Given $r,t\in \{1,2,\ldots,m\}$ with $\Lambda_r \cap \Lambda_t^\prime\not=\emptyset$, and given $\alpha,\beta\in \{0,1,\ldots,N-K\}$ with $[x_\alpha,x_{\alpha+K}] \times [y_\beta,y_{\beta+K}]\in \{D_{ij}^\prime:\, (i,j)\in \gL'_t\}$, we define
  \begin{equation*}
    S^*(x,y)=\sum_{(i,j)\in \Lambda_r(\alpha,\beta)}  |S(u_i(x),v_j(y))| - \gamma_{rt},
  \end{equation*}
  where $(x,y)\in [x_\alpha,x_{\alpha+K}] \times [y_\beta,y_{\beta+K}]$.
  Since the vertical scaling factors are steady, we know that for each $(i,j)\in \Lambda_r(\alpha,\beta)$, the function $S$ is nonnegative or nonpositive on $D_{ij}$.
  It follows that $S(u_i(x),v_j(y))$ is nonnegative or nonpositive on $[x_\alpha,x_{\alpha+K}] \times [y_\beta,y_{\beta+K}]$. As a result,  $S^*$ is a bilinear function on $[x_\alpha,x_{\alpha+K}] \times [y_\beta,y_{\beta+K}]$. Thus, from $S^*=0$ on $(x_\alpha,y_\beta),(x_{\alpha},y_{\beta+K}), (x_{\alpha+K},y_{\beta})$ and $(x_{\alpha+K},y_{\beta+K})$, we know that $S^*=0$ on $[x_\alpha,x_{\alpha+K}] \times [y_\beta,y_{\beta+K}]$ so that the lemma holds.
\end{proof}





\subsection{Calculation of box dimension}
In this subsection, we always assume that $f$ is the bilinear RFIF determined in the section~\ref{sec:bilinear} with conditions \eqref{eq:hom-cond-1}, \eqref{eq:hom-cond-2} and \eqref{eq:hom-cond-3}. Furthermore, we assume that the vertical scaling factors $\{s_{ij}:\, i,j\in \Sigma_{N,0}\}$ are steady and have uniform sums $\{\gamma_{rt}:\, \Lambda_r\cap \Lambda_t^\prime\not=\emptyset\}$ under a compatible partition $\SB=\{B_r\}_{r=1}^m$.

Using our assumptions, we can obtain the following basic result.
\begin{lem}\label{lem:Osi-local-ineq-1}
  There exists a positive constant $C>0$ such that
  \begin{equation}\label{eq:N-rec-rel}
      \Big|O(f,n+1,B_{r})-\sum_{t=1}^m \gamma_{rt} O(f,n,B_{t})\Big|\leq CK^{n}
  \end{equation}
  for all $1\leq r\leq m$ and $n\in \mathbb{Z}^+$, where we define $\gamma_{rt}=0$ if $\Lambda_r\cap \Lambda_t^\prime=\emptyset$.
\end{lem}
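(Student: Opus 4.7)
The strategy is to use the self-referential identity on each rectangle $D_{ij}$ to express the oscillation of $f$ on level-$(n+1)$ cells inside $D_{ij}$ in terms of its oscillation on the corresponding level-$n$ cells inside $D'_{ij}$, and then to sum over $(i,j)\in\Lambda_r$ while invoking Lemma~\ref{lem:vertScal}. Since $|I'_i|/|I_i| = |J'_j|/|J_j| = K$, the map $(u_i,v_j)$ sends level-$n$ cells $\wdt{E}\subset D'_{ij}$ bijectively to level-$(n+1)$ cells $D^{n+1}_{k\ell}\subset D_{ij}$; together with \eqref{eq: f-recu-prop} this gives $f(u_i(x),v_j(y)) = \phi_{ij}(x,y)$ on $D'_{ij}$, where
\[
  \phi_{ij}(x,y) \;=\; S(u_i(x),v_j(y))\bigl(f(x,y)-g_{ij}(x,y)\bigr) + h(u_i(x),v_j(y)).
\]
Hence $O(f,D^{n+1}_{k\ell}) = O(\phi_{ij},\wdt{E})$ for each corresponding pair, and
\[
  O(f,n+1,B_r) \;=\; \sum_{(i,j)\in\Lambda_r}\ \sum_{\wdt{E}\subset D'_{ij}} O(\phi_{ij},\wdt{E}).
\]

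The local step is to approximate $O(\phi_{ij},\wdt{E})$ by a constant multiple of $O(f,\wdt{E})$. For each level-$n$ cell $\wdt{E}$, fix a reference point $(\xi_{\wdt{E}},\eta_{\wdt{E}})\in\wdt{E}$ depending only on $\wdt{E}$, and set $s^*_{ij,\wdt{E}} := S(u_i(\xi_{\wdt{E}}),v_j(\eta_{\wdt{E}}))$. Writing $\phi_{ij} = s^*_{ij,\wdt{E}}\,f + \psi_{ij,\wdt{E}}$ and using the elementary oscillation inequality $|O(a+b) - O(a)|\le O(b)$ together with $O(cf)=|c|\,O(f)$ for constant $c$, one obtains
\[
  \bigl|O(\phi_{ij},\wdt{E}) - |s^*_{ij,\wdt{E}}|\,O(f,\wdt{E})\bigr| \;\le\; O(\psi_{ij,\wdt{E}},\wdt{E}).
\]
The remainder $\psi_{ij,\wdt{E}}$ decomposes into terms involving $S\circ(u_i,v_j)-s^*_{ij,\wdt{E}}$, $g_{ij}$, and $h\circ(u_i,v_j)$, each piecewise bilinear with a uniformly bounded Lipschitz constant on its natural domain. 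Since $(u_i,v_j)$ contracts by $1/K$ and $\diam\wdt{E}=O(1/(K^n N))$, a short Lipschitz computation (using $\max|S|<1$ and boundedness of $f,g_{ij},h$) gives $O(\psi_{ij,\wdt{E}},\wdt{E}) \le C_1/K^{n+1}$, where $C_1$ depends only on $f$, $S$, $h$, $N$, and $K$.

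Finally, partition $\Lambda_r = \bigsqcup_{(\alpha,\beta)}\Lambda_r(\alpha,\beta)$ over those pairs making $\Lambda_r(\alpha,\beta)$ nonempty, and let $t=t(\alpha,\beta)$ be the unique index with $E_{\alpha,\beta}:=[x_\alpha,x_{\alpha+K}]\times[y_\beta,y_{\beta+K}]\subset B_t$. For fixed $(\alpha,\beta)$, every $(i,j)\in\Lambda_r(\alpha,\beta)$ has $D'_{ij}=E_{\alpha,\beta}$, so the inner sum runs over a common family of subcells $\wdt{E}\subset E_{\alpha,\beta}$. Lemma~\ref{lem:vertScal} applied at $(\xi_{\wdt{E}},\eta_{\wdt{E}})\in E_{\alpha,\beta}$ yields $\sum_{(i,j)\in\Lambda_r(\alpha,\beta)} |s^*_{ij,\wdt{E}}| = \gamma_{r,t(\alpha,\beta)}$ for every $\wdt{E}$. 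Summing the local estimates and interchanging the orders of summation gives a main term $\sum_{(\alpha,\beta)}\gamma_{r,t(\alpha,\beta)}\,O(f,n,E_{\alpha,\beta})$; the second compatibility condition for $\SB$ ensures that, for each $t$, the cells $E_{\alpha,\beta}$ with $t(\alpha,\beta)=t$ partition $B_t$ up to measure zero, so this collapses to $\sum_t \gamma_{rt}\,O(f,n,B_t)$ (with $\gamma_{rt}=0$ when $\Lambda_r\cap\Lambda_t'=\emptyset$). The accumulated error is at most $|\Lambda_r|\cdot K^{2(n+1)}\cdot C_1/K^{n+1} \le C_1 N^2 K\cdot K^n$, which is the desired bound $CK^n$.

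The main obstacle is the local oscillation comparison: establishing the uniform bound $O(\psi_{ij,\wdt{E}},\wdt{E}) \le C_1/K^{n+1}$ requires careful bookkeeping of Lipschitz constants for the piecewise bilinear data $S$, $g_{ij}$, $h$ on their different domains, and verifying that precomposition with $(u_i,v_j)$ contributes the decisive factor of $1/K$. Equally crucial is the requirement that $(\xi_{\wdt{E}},\eta_{\wdt{E}})$ depend only on $\wdt{E}$ and not on $(i,j)$; this is precisely what allows Lemma~\ref{lem:vertScal} to collapse the inner sum over $(i,j)$ into the single constant $\gamma_{r,t(\alpha,\beta)}$, and the cancellation would fail for any reference point that varied with $(i,j)$.
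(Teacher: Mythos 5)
Your proposal is correct and follows essentially the same route as the paper's proof: reduce to the blocks $\Lambda_r(\alpha,\beta)$ via the two compatibility conditions, freeze the vertical scale factor at a reference point chosen per level-$n$ cell (independently of $(i,j)$), bound the resulting error by a Lipschitz/gradient estimate of order $\varepsilon_n$ for the bilinear data, and sum using Lemma~\ref{lem:vertScal} to produce the coefficients $\gamma_{rt}$. The only cosmetic difference is that the paper packages the local error as a gradient bound on $\hat{F}_{i,j,z^*}(x,y)=F_{ij}(x,y,z^*)$ rather than as your additive decomposition $\phi_{ij}=s^*_{ij,\wdt{E}}f+\psi_{ij,\wdt{E}}$ with the oscillation inequality.
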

\begin{proof}
%
%

From the first condition of compatible partition,
\begin{equation*}
  B_r=\bigcup_{t:\, \Lambda_r\cap \Lambda_t'\not=\emptyset} \cup\{D_{ij}:\, (i,j)\in \Lambda_r\cap \Lambda_t'\}
\end{equation*}
for all $1\leq r\leq m$. Thus, in order to prove the lemma, it suffices to show that for all $1\leq r,t\leq m$ with $\Lambda_r\cap \Lambda_t^\prime\not=\emptyset$, there exists a constant $C_{rt}>0$, such that
  \begin{equation*}
      \Big| \sum_{(i,j)\in \Lambda_r\cap \Lambda_t^\prime} O(f,n+1,D_{ij})-  \gamma_{rt} O(f,n,B_{t})\Big|\leq C_{rt}K^{n}.
  \end{equation*}


For all $\alpha,\beta\in\{0,1,\ldots,N-K\}$, we denote $\wdt{D}_{\ga\gb}=[x_\ga,x_{\ga+K}]\times [y_\gb,y_{\gb+K}]$.
From the second condition of compatible partition, for all $r,t=1,2,\ldots,m$ with $\Lambda_r\cap \Lambda_t^\prime\not=\emptyset$,
\begin{equation*}
  B_t=\cup\{ \wdt{D}_{\alpha\beta}:\, \exists (i,j)\in \Lambda_r\cap \Lambda_t^\prime, \textrm{ such that } \wdt{D}_{\alpha\beta}=D_{ij}^\prime\}.
\end{equation*}
On the other hand, it is clear that for each $(i,j)\in \Lambda_r\cap \Lambda_t^\prime$, there exists a unique $(\alpha,\beta)\in \{0,1,\ldots,N-K\}^2$, such that
$D_{ij}^\prime=\wdt{D}_{\alpha\beta}\subset B_t$.
Thus, in order to prove the lemma, it suffices to show that for all $r,t=1,2,\ldots,m$ with $\Lambda_r\cap \Lambda_t^\prime\not=\emptyset$, and all $\alpha,\beta=0,1,\ldots,N-K$ with $\wdt{D}_{\alpha\beta}=D_{ij}^\prime$ for some $(i,j)\in \Lambda_r\cap \Lambda_t^\prime$, there exists a constant $C_{r,t,\ga,\gb}>0$, such that
\begin{equation}\label{eq: osi-ineq-temp}
   \Big|\sum_{(i,j)\in \Lambda_r(\alpha,\beta)} O(f,n+1,D_{ij})- \gamma_{rt} O(f,n,\widetilde{D}_{\alpha,\beta})\Big|\leq C_{r,t,\ga,\gb} K^{n}
\end{equation}
for all positive integers $n$.

 Denote $M^*=\max\{|f(x,y)|:\,(x,y)\in [0,1]^2\}$. For $(i,j)\in \Sigma_N\times \Sigma_N$ and  $z^*\in[-M^*,M^*]$, we define
\begin{equation*}
  \hat{F}_{i,j,z^*}(x,y) = F_{ij}(x,y,z^*), \quad (x,y)\in D_{ij}^\prime.
\end{equation*}
Since $S(u_i(x),v_j(y)), g_{ij}(x,y), h(u_i(x),v_j(y))$ are all bilinear functions on $D_{ij}^\prime$, we know that
\begin{equation*}
  C_{ij} = \sup_{\substack{(x,y)\in int(D_{ij}^\prime) \\ z^*\in [-M^*,M^*]}}
  \|\nabla \hat{F}_{i,j,z^*}(x,y) \| <\infty,
\end{equation*}
where $\| \cdot \|$ is the standard Euclidean norm.

Given $1\leq k,\ell\leq K^{n}N$ with $D_{k\ell}^n \subset D_{ij}'$, we fix a point $(x_{k,n},y_{\ell,n})\in D_{k\ell}^n$. It is clear that
for all $(x,y)\in D_{k\ell}^n$,
\begin{align}
  |F_{ij}(x_{k,n},y_{\ell,n},f(x,y))
  -F_{ij}(x,y,f(x,y))|  \leq \sqrt{2} C_{ij} \gep_n. \label{eq:Fkl-est}
\end{align}
On the other hand, for all $(x',y'),(x'',y'') \in D_{k\ell}^n$, we have
\begin{align*}
 & F_{ij}(x_{k,n},y_{\ell,n}, f(x',y') - F_{ij}(x_{k,n},y_{\ell,n},f(x'',y'')) \\
  = & S(u_i(x_{k,n}), v_j(y_{\ell,n}))
 \Big( f(x',y') - f(x'',y'') \Big).
\end{align*}
Combining this with \eqref{eq: f-recu-prop}   and  \eqref{eq:Fkl-est}, we have
\begin{align*}
  &|f(u_i(x'),v_j(y')) - f(u_i(x''),v_j(y''))| \\
  \leq & |S(u_i(x_{k,n}), v_j(y_{\ell,n}))|\cdot |f(x',y') - f(x'',y'')| + 2\sqrt{2} C_{ij} \gep_n
\end{align*}
so that
\begin{align*}
  &O(f,(u_i\times v_j)(D_{k\ell}^n)) \leq |S(u_i(x_{k,n}), v_j(y_{\ell,n}))|\cdot O(f,D_{k\ell}^n) + 2\sqrt{2} C_{ij} \gep_n.
\end{align*}
Thus, from  Lemma~\ref{lem:vertScal},
\begin{align*}
&  \sum_{(i,j)\in \Lambda_r(\alpha,\beta)}  O(f,n+1,D_{ij}) \\
   \leq & \sum_{\substack{1\leq k,\ell\leq K^{n}N \\ D_{k\ell}^n\subset \wdt{D}_{\alpha\beta}}} \sum_{(i,j)\in \Lambda_r(\alpha,\beta)}
     |S(u_i(x_{k,n}), v_j(y_{\ell,n}))|\cdot O(f,D_{k\ell}^n) +2\sqrt{2} C_{ij} NK^{n+2} \\
   = & \gamma_{rt}\cdot O(f,n,\widetilde{D}_{\alpha\beta})+2\sqrt{2} C_{ij} NK^{n+2}.
\end{align*}
Similarly, we have
  $$\sum_{(i,j)\in \gL_r(\ga,\gb)} O(f,n+1,D_{ij})\geq \gamma_{rt} O(f,n,\widetilde{D}_{\alpha\beta})-2\sqrt{2} C_{ij} NK^{n+2}$$
so that \eqref{eq: osi-ineq-temp} holds.
\end{proof}



Define $G=(\gamma_{rt})_{m\times m}$, where $\gamma_{rt}=0$ if $\Lambda_r\cap \Lambda_t^\prime=\emptyset$. Given $1\leq r,t\leq m$ and $n\in \bZp$,  a finite sequence $\{i_k\}_{k=0}^n$ in $\{1,2,\ldots,m\}$ is called an \emph{$n$-path} (a path for short) from $t$ to $r$ if $i_0=t$, $i_n=r$, and
  $$\gamma_{i_{k},i_{k-1}}>0, \quad \forall k=1,2,\ldots,n.$$
$r$ and $t$ are called \emph{connected}, denoted by $r\sim t$, if there exist both a path from $r$ to $t$, and a path from $t$ to $r$. We remark that in general, it is possible that there is no path from $r$ to itself.
A subset $V$ of $\{1,2,\ldots,m\}$ is called \emph{connected} if $r\sim t$ for all $r,t\in V$. Furthermore, $V$ is called a \emph{connected component} of $\{1,2,\ldots,m\}$ if $V$ is connected and there is no connected subset $\wdt{V}$ of $\{1,2,\ldots,m\}$ such that $V\subsetneq \wdt{V}$. It is well known that the matrix $G$ is \emph{irreducible} if and only if $\{1,2,\ldots,m\}$ is connected. Please see  \cite{Varga}  for details.

Given a connected component $V=\{r_{1},\dots,r_{t}\}$ of $\{1,2,\ldots,m\}$, where $r_{1}<r_{2}<\dots<r_{t}$, we define a submatrix $G|_V$ of $G$ by
\begin{equation*}
  (G|_V)_{k\ell}=\gamma_{r_{k},r_{\ell}}, \quad 1\leq k,\ell\leq t.
\end{equation*}





\begin{defi}

  Given $(i,j),(k,\ell)\in\Sigma_N\times\Sigma_N$, we call $(k,\ell)$  an \emph{ancestor} of $(i,j)$  if there exists a finite sequence $\{(i_\tau,j_\tau)\}_{\tau=0}^n$ in $\Sigma_N\times\Sigma_N$ such that $(i_0,j_0)=(i,j)$, $(i_n,j_n)=(k,\ell)$, and
  $$D_{i_\tau, j_\tau}\subset D_{i_{\tau-1},j_{\tau-1}}^\prime, \quad \forall \tau=1,2,\ldots,n.$$
  Let  $A_{0}(i,j)$ be the  collection of  all ancestors of $(i,j)$ and $A(i,j)=\{(i,j)\}\bigcup A_{0}(i,j)$.
  We call $(i,j)$ \emph{degenerate} if for all $(k,\ell)\in A(i,j)$, we have either
  \begin{equation}\label{eq:s-degenerate}
    s_{k-1,\ell-1}=s_{k-1,\ell}=s_{k,\ell-1}=s_{k\ell}=0, \quad \textrm{or}
  \end{equation}
  \begin{equation}\label{eq:degenerate}
    z_{pq}=g_{k\ell}(x_p,y_q), \quad\forall p,q\in \Sigma_{N,0} \textrm{ with $(x_p,y_q)\in D_{k\ell}^\prime$.}
  \end{equation}
  Given $1\leq r\leq m$, we call $r$ is degenerate if $(i,j)$ is degenerate for all $(i,j)\in \Lambda_r$.
  A connected component $V$ of $\{1,2,\ldots,m\}$ is called \emph{degenerate} if $r$ is degenerate for all $r\in V$. Otherwise, $V$ is called non-degenerate.
\end{defi}

By definition, a connected component $V$ of $\{1,2,\ldots,m\}$ is non-degenerate if there exists $(i,j)\in \cup\{\Lambda_r:\, r\in V\}$ such that $(i,j)$ is non-degenerate.

\begin{rem}\label{rem:colliner}
It is clear that \eqref{eq:s-degenerate} holds if and only if $S|_{D_{k\ell}}=0$.
Since $g_{k\ell}$ is bilinear, we can see that \eqref{eq:degenerate} holds if and only if the following two conditions hold:
\begin{enumerate}
  \item for each $p\in \Sigma_{N,0}$ with $x_p\in I'_k$, points in  $\{(x_p,y_q,z_{pq}):\, q\in \Sigma_{N,0}, y_q\in J'_\ell\}$ are collinear; and
  \item for each $q\in \Sigma_{N,0}$ with $y_q\in J'_\ell$, points in  $\{(x_p,y_q,z_{pq}):\, p\in \Sigma_{N,0}, x_p\in I'_k\}$ are collinear.
\end{enumerate}
\end{rem}

\begin{lem}\label{lem:degenerate}
  Given $(i,j)\in\Sigma_N\times \Sigma_N$, if $(i,j)$ is degenerate, then
  \begin{enumerate}
    \item for all $(k, \ell)\in A(i,j)$, we have $f(x,y)=h(x,y)$, $(x,y)\in D_{k\ell}$,
    \item there exists a constant $C>0$, such that $O(f,n,D_{ij})\leq C K^n$ for all $n\in \bZ^+$,
    \item $\dim_B \Gamma f|_{D_{ij}}=2$.
  \end{enumerate}
\end{lem}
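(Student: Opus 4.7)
The plan is to reduce parts (2) and (3) to part (1), and then prove (1) by a contraction argument applied to $|f-h|$ on the ancestor set.

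The crucial observation is that $A(i,j)$ is closed under descent: if $(k,\ell)\in A(i,j)$ and $D_{p,q}\subset D_{k,\ell}'$, then appending $(p,q)$ to the ancestor chain from $(i,j)$ to $(k,\ell)$ shows $(p,q)\in A(i,j)$, so $(p,q)$ is itself degenerate. I also record a preliminary fact: if condition \eqref{eq:degenerate} holds at $(k,\ell)$, then $h\equiv g_{k\ell}$ on $D_{k\ell}'$, because on every sub-rectangle $D_{p,q}\subset D_{k\ell}'$ both $h$ and $g_{k\ell}$ are bilinear and agree at the four corners by \eqref{eq:degenerate}.

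For (1), I set
\[
  M=\max_{(k,\ell)\in A(i,j)}\sup_{D_{k,\ell}}|f-h|,
\]
which is finite by continuity. Using \eqref{eq: f-recu-prop}, for $(X,Y)=(u_k(x),v_\ell(y))\in D_{k,\ell}$ with $(x,y)\in D_{k\ell}'$,
\[
  f(X,Y)-h(X,Y)=S(X,Y)\bigl(f(x,y)-g_{k\ell}(x,y)\bigr).
\]
If \eqref{eq:s-degenerate} holds at $(k,\ell)$, then $S\equiv 0$ on $D_{k,\ell}$ and the right side vanishes. If instead \eqref{eq:degenerate} holds, the preliminary gives $g_{k\ell}=h$ on $D_{k\ell}'$, and descent-closure implies $|f-h|\le M$ on every $D_{p,q}\subset D_{k\ell}'$, hence on $D_{k\ell}'$; thus $|f(X,Y)-h(X,Y)|\le \sigma M$, where $\sigma=\max_{I\times J}|S|<1$. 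Taking the supremum over $(X,Y)\in D_{k,\ell}$ and then over $(k,\ell)\in A(i,j)$ yields $M\le \sigma M$, forcing $M=0$ and establishing (1).

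Parts (2) and (3) then follow immediately: since $f=h$ on $D_{ij}$ and $h$ is piecewise bilinear (hence Lipschitz) on $D_{ij}$ with some constant $L$, each of the $K^{2n}$ cells $D_{k\ell}^n\subset D_{ij}$ has oscillation at most $\sqrt2\,L/(K^nN)$, giving $O(f,n,D_{ij})\le CK^n$. Combined with the fact that the graph of a Lipschitz function on a planar region has box dimension two (or by the straightforward localization of Lemma~\ref{lem:box-dim} to $D_{ij}$), this yields $\dim_B\Gamma f|_{D_{ij}}=2$. The main point of the argument is really the descent-closure of $A(i,j)$, which makes $M$ well-defined and allows the contraction step to close; the two-case analysis is otherwise routine.
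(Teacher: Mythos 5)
Your proof is correct and rests on exactly the same two structural facts as the paper's argument: the descent-closure of $A(i,j)$ (every $(p,q)$ with $D_{pq}\subset D'_{k\ell}$ for some $(k,\ell)\in A(i,j)$ is again in $A(i,j)$) and the identification $g_{k\ell}=h$ on $D'_{k\ell}$ when \eqref{eq:degenerate} holds. The only difference is in packaging: the paper proves that the closed subspace of $C^{*}([0,1]^2)$ consisting of functions equal to $h$ on every $D_{k\ell}$ with $(k,\ell)\in A(i,j)$ is invariant under the operator $T$ and concludes that the fixed point $f$ lies in it, whereas you apply the same contraction (via $\max|S|<1$) directly to $M=\max_{(k,\ell)\in A(i,j)}\sup_{D_{k\ell}}|f-h|$ to force $M\le\sigma M$ and hence $M=0$; parts (2) and (3) are then deduced from (1) just as in the paper.
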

\begin{proof}
  (1).~~ Let $T$ and $C^*([0,1]^2)$ be the same as defined in the proof of Theorem~\ref{theo: general-RFIF}. Let $C_{ij}^{*}([0,1]^2)$ be the collection of all continuous functions $\varphi\in C^{*}([0,1]^2)$ satisfying
  $$\varphi(x,y)=h(x,y), \qquad (x,y)\in D_{k\ell},$$
  for all $(k,\ell)\in A(i,j)$.

  Given $\varphi\in C_{ij}^{*}([0,1]^2)$ and $(k,\ell)\in A(i,j)$, if $S|_{D_{k\ell}}\not=0$, then \eqref{eq:degenerate} holds. In this case,  for all $(\ga,\gb)\in \Sigma_N\times \Sigma_N$ with $D_{\ga\gb}\subset D_{k\ell}^\prime$, we have $(\ga,\gb)\in A(i,j)$ so that
    $$\varphi(x,y)=h(x,y), \qquad (x,y)\in D_{\ga\gb}.$$
  Combining this with  \eqref{eq:degenerate}, we know that $\varphi$ is bilinear on $D_{k\ell}^\prime$. Thus,
    $$\varphi(x,y)=g_{k\ell}(x,y), \qquad (x,y)\in D_{k\ell}^\prime.$$
  Hence, using \eqref{eq:T-def} and \eqref{eq: Fij-def}, we have
  \begin{align*}
     T\varphi(x,y)&=F_{k\ell}\big( u_k^{-1}(x), v_\ell^{-1}(y), \varphi(u_k^{-1}(x),v_\ell^{-1}(y)) \big) \\
                  &=S(x,y)\Big( \varphi\big(u_k^{-1}(x),v_\ell^{-1}(y) \big) - g_{k\ell} \big(u_k^{-1}(x),v_\ell^{-1}(y) \big) \Big)+h(x,y) \\
                  &=h(x,y)
  \end{align*}
  for all $(x,y)\in D_{k\ell}$. In the case that $S|_{D_{k\ell}}=0$, it is clear that we still have $T\varphi(x,y)=h(x,y)$ on $D_{k\ell}$ by using \eqref{eq:T-def} and \eqref{eq: Fij-def}. Thus $T$ is a map from $C_{ij}^{*}([0,1]^2)$ to itself. Notice that $C_{ij}^{*}([0,1]^2)$ is complete since it is closed in $C^{*}([0,1]^2)$. Hence $f\in C_{ij}^{*}([0,1]^2)$.

  (2) and (3) directly follow from (1).
\end{proof}

Given a matrix $X=(X_{ij})_{n\times n}$, we say $X$ is non-negative if $X_{ij}\geq0$ for all $i$ and $j$. $X$ is called strictly positive if $X_{ij}>0$ for all $i$ and $j$.
The following lemma is well known. Please see \cite{Varga} for details.
\begin{lem}[Perron-Frobenius Theorem]
Let $X=(X_{ij})_{n\times n}$ be an irreducible non-negative matrix. Then
   \begin{enumerate}
   \item $\rho(X)$, the spectral radius of $X$, is an eigenvalue of $X$ and has strictly positive eigenvector.
   \item $\rho(X)$ increases if any element of $X$ increases.
     \end{enumerate}
\end{lem}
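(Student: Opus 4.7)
The plan is to split the theorem into its two assertions and treat them in sequence. For part (1), I would pass to the auxiliary matrix $A = I + X$, which is non-negative, irreducible, and has $1$'s on the diagonal, so no row of $A$ vanishes. The map $v \mapsto Av/\|Av\|_1$ is then a well-defined continuous self-map of the simplex $\Delta = \{v \in \mathbb{R}^n : v \geq 0,\ \sum_i v_i = 1\}$, and Brouwer's fixed point theorem supplies $v^* \in \Delta$ with $Av^* = \mu v^*$, that is $Xv^* = \lambda^* v^*$ with $\lambda^* = \mu - 1 \geq 0$. To upgrade $v^*$ to a strictly positive vector I would invoke the combinatorial fact that irreducibility of $X$ forces $(I+X)^{n-1}$ to have strictly positive entries: since $v^* \geq 0$ with $v^* \not= 0$, we get $\mu^{n-1} v^* = A^{n-1} v^* > 0$ componentwise, and $\mu \geq 1 > 0$ then gives $v^* > 0$.

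The remaining content of (1) is to identify $\lambda^*$ with $\rho(X)$. I would use the Collatz--Wielandt functional
\begin{equation*}
  r(v) = \min_{i:\, v_i > 0} \frac{(Xv)_i}{v_i}, \qquad v \geq 0,\ v \not= 0,
\end{equation*}
together with two observations: (a) the positive eigenvector $v^*$ from above yields $r(v^*) = \lambda^*$; and (b) for any eigenpair $Xw = \lambda w$ with $w \in \mathbb{C}^n\setminus\{0\}$, componentwise non-negativity of $X$ combined with the triangle inequality gives $X|w| \geq |\lambda|\,|w|$, hence $r(|w|) \geq |\lambda|$. Combining (a) and (b), every eigenvalue $\lambda$ of $X$ satisfies $|\lambda| \leq \lambda^*$, so $\lambda^* = \rho(X)$.

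For part (2), suppose $X'$ is obtained from $X$ by increasing at least one entry, so $X' \geq X$ entrywise and $X' \not= X$. Adding non-negative entries preserves irreducibility, so $X'$ is still irreducible, and part (1) applied to $(X')^T$ yields a strictly positive left eigenvector $w > 0$ of $X'$ with $w^T X' = \rho(X')\, w^T$. Letting $v > 0$ be the positive right Perron eigenvector of $X$ from part (1), the computation
\begin{equation*}
  \rho(X')\, w^T v \;=\; w^T X' v \;=\; w^T X v + w^T (X' - X) v \;>\; w^T X v \;=\; \rho(X)\, w^T v
\end{equation*}
is strict because $w$ and $v$ are strictly positive while $X' - X$ is non-negative and nonzero. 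Dividing through by $w^T v > 0$ delivers $\rho(X') > \rho(X)$.

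The genuine obstacle hidden in this outline is the combinatorial lemma that $(I+X)^{n-1}$ is strictly positive whenever $X$ is non-negative and irreducible; this is the single place where irreducibility is actually used, and everything else is routine. The cleanest argument is to view $X$ as the weighted adjacency matrix of a directed graph, under which irreducibility translates to strong connectivity and the $(i,j)$-entry of $(I+X)^{n-1}$ records the total weight of walks of length at most $n-1$ from $i$ to $j$, which is strictly positive by strong connectivity. Once this lemma is granted, the Brouwer argument, the Collatz--Wielandt comparison, and the two-sided eigenvector trick for monotonicity fit together without further difficulty, which is why the authors are content to cite \cite{Varga}.
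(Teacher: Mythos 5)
The paper does not actually prove this statement: it is recorded as a well-known result (the Perron--Frobenius theorem) with a pointer to Varga's book, so there is no in-paper argument to compare yours against. Your proof is the standard one --- Brouwer's fixed point theorem on the simplex applied to $A=I+X$, the combinatorial fact that $(I+X)^{n-1}$ is strictly positive for irreducible non-negative $X$, the Collatz--Wielandt functional to identify the eigenvalue with the spectral radius, and the left/right eigenvector pairing for strict monotonicity --- and it is essentially correct; part (2) in particular is complete as written, since augmenting entries only adds edges to the associated digraph and hence preserves irreducibility.

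The one place the sketch needs patching is the claim that (a) and (b) together give $|\lambda|\le\lambda^*$. Statement (a) says $r(v^*)=\lambda^*$ and (b) says $r(|w|)\ge|\lambda|$; both are lower bounds on values of $r$, so they do not by themselves bound $|\lambda|$ from above by $\lambda^*$. You also need the ``max'' half of the Collatz--Wielandt characterization, namely $r(v)\le\lambda^*$ for every $v\ge 0$, $v\ne 0$. This follows by pairing against a strictly positive left eigenvector $u$ of $X$, obtained by running your Brouwer argument on $X^T$ (its eigenvalue must equal $\lambda^*$ because $u^Tv^*>0$): from $Xv\ge r(v)\,v$ one gets $\lambda^*\,u^Tv=u^TXv\ge r(v)\,u^Tv$ with $u^Tv>0$, hence $r(v)\le\lambda^*$. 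Applied to $v=|w|$ this yields $|\lambda|\le\lambda^*$, and with that one line inserted the argument is complete.
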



  Given three points $(x^{(k)},y)$, $k=1,2,3$ in $[0,1]^2$ with $(x^{(2)}-x^{(1)})(x^{(3)}-x^{(2)})>0$, we denote
  \begin{equation*}
    d_f(x^{(1)},x^{(2)},x^{(3)};y)=|f(x^{(2)},y)-\big(\lambda f(x^{(1)},y) + (1-\lambda) f(x^{(3)},y) \big)|,
  \end{equation*}
  where $\lambda=(x^{(3)}-x^{(2)})/(x^{(3)}-x^{(1)})$. By definition, $(x^{(k)},y,f(x^{(k)},y)),k=1,2,3$ are collinear if and only if $d_f(x^{(1)},x^{(2)},x^{(3)};y)=0$. Furthermore, if all of $(x^{(k)},y)$, $k=1,2,3$ lie in a subset $E$ of $[0,1]^2$, then $O(f,E)\geq d_f(x^{(1)},x^{(2)},x^{(3)};y)$.

The following result is the key lemma to obtain the exact box dimension of bilinear RFISs.

\begin{lem}\label{lem:Osi-global-ineq}
Let $V$ be a non-degenerate connected component of $\{1,2,\ldots,m\}$. If $\rho(G|_{V})>K$, then
  for all $r\in V$,
  \begin{align}\label{eq:lem-4.9}
    \lim_{n\rightarrow\infty}\frac{O(f,n,B_{r})}{K^{n}}=\infty.
  \end{align}
\end{lem}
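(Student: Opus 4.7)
My plan is to apply Perron--Frobenius to $G|_V$, derive a scalar recurrence for a Perron-weighted sum of oscillations, combine with the hypothesis $\rho(G|_V)>K$ to obtain exponential growth, and use non-degeneracy to rule out the marginal solution.

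Since $V$ is a connected component, $G|_V$ is an irreducible non-negative matrix. By Perron--Frobenius, $\lambda:=\rho(G|_V)>K$ is an eigenvalue of $G|_V$ with a strictly positive left eigenvector $\mathbf{v}=(v_r)_{r\in V}$ satisfying $\mathbf{v}^T G|_V=\lambda\mathbf{v}^T$. Setting $a_n:=\sum_{r\in V}v_r O(f,n,B_r)$, and applying Lemma~\ref{lem:Osi-local-ineq-1} to each $r\in V$ (dropping the non-negative contributions from $t\notin V$), multiplying by $v_r$ and summing yields
\begin{equation*}
a_{n+1}\ \ge\ \lambda a_n-C'K^n,\qquad C':=C\sum_{r\in V}v_r.
\end{equation*}
Set $c^\ast:=C'/(\lambda-K)$. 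The algebraic identity $a_{n+1}-c^\ast K^{n+1}\ge\lambda(a_n-c^\ast K^n)$ shows that if at any time $n_0$ we have $a_{n_0}>c^\ast K^{n_0}$, then $a_n-c^\ast K^n\ge\lambda^{n-n_0}(a_{n_0}-c^\ast K^{n_0})$ grows like $\lambda^n$, so $a_n/K^n\to\infty$. Irreducibility of $G|_V$ then spreads this growth componentwise: iterating Lemma~\ref{lem:Osi-local-ineq-1} $p$ times gives $O(f,n+p,B_r)\ge((G|_V)^p)_{rs}O(f,n,B_s)-O(K^{n+p})$, and for each pair $(r,s)$ one can pick $p=p(r,s)$ with $((G|_V)^p)_{rs}>0$; combined with a pigeonhole argument on the index achieving the maximum in $a_n$, this yields $O(f,n,B_r)/K^n\to\infty$ for every $r\in V$.

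The main obstacle is exhibiting the seed $n_0$ with $a_{n_0}>c^\ast K^{n_0}$, because the scalar recurrence admits the marginal solution $a_n\equiv c^\ast K^n$; this step truly requires the non-degeneracy of $V$. Non-degeneracy supplies $r_0\in V$, $(i_0,j_0)\in\Lambda_{r_0}$, and an ancestor $(k,\ell)\in A(i_0,j_0)$ with $S|_{D_{k\ell}}\not\equiv 0$ and a grid point $(x_{p^*},y_{q^*})\in D_{k\ell}^\prime$ satisfying $z_{p^*q^*}\neq g_{k\ell}(x_{p^*},y_{q^*})$. Writing $d:=f-h$ and $e_{ij}:=h-g_{ij}$, equation~\eqref{eq: f-recu-prop} becomes
\begin{equation*}
d(u_i(x),v_j(y))=S(u_i(x),v_j(y))\bigl(d(x,y)+e_{ij}(x,y)\bigr),\qquad (x,y)\in D_{ij}^\prime,
\end{equation*}
and since $d$ vanishes at every interpolation vertex, one application at the witness point gives $d(u_k(x_{p^*}),v_\ell(y_{q^*}))=S(u_k(x_{p^*}),v_\ell(y_{q^*}))\cdot e_{k\ell}(x_{p^*},y_{q^*})\neq 0$ (after, if necessary, perturbing the witness off the zero line of the bilinear $S$). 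Chaining this construction backward along the ancestor sequence $(i_0,j_0),\dots,(k,\ell)$, and converting pointwise non-vanishing of $d$ into oscillation of $f$ via the three-point inequality $O(f,E)\ge d_f(x^{(1)},x^{(2)},x^{(3)};y)$, produces a strictly positive oscillation contribution inside $B_{r_0}$. To pass from a mere positive contribution to overtaking the growing threshold $c^\ast K^{n_0}$, I would argue by contradiction: if $a_n\le c^\ast K^n$ for all $n$, a compactness/renormalization argument applied to the rescaled oscillation density together with the self-similar functional equation would force $f\equiv h$ on $\bigcup_{r\in V}B_r$, contradicting the non-vanishing of $d$ just established. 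The strict inequality $\rho(G|_V)>K$ is precisely what makes this contradiction available.
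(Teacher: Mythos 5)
There is a genuine gap, and you have in fact located it yourself: the seed step. Your recurrence $a_{n+1}\ge\lambda a_n-C'K^n$ coming from Lemma~\ref{lem:Osi-local-ineq-1} admits the marginal solution $a_n\equiv c^\ast K^n$, so everything hinges on producing some $n_0$ with $a_{n_0}>c^\ast K^{n_0}$. Your proposed resolution --- assume $a_n\le c^\ast K^n$ for all $n$ and derive $f\equiv h$ on $\bigcup_{r\in V}B_r$ by a ``compactness/renormalization argument'' --- is not a proof, and the implication you would need is not true as stated: $O(f,n,B_r)=O(K^n)$ for all $n$ is exactly what happens for any Lipschitz (e.g.\ smooth, non-bilinear) function, so it is perfectly compatible with $f\not\equiv h$. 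The non-degeneracy of $V$ together with $\rho(G|_V)>K$ must be used \emph{quantitatively} to beat the growing threshold $c^\ast K^{n_0}$, and nothing in your sketch does that. (A secondary issue: your pointwise identity for $d=f-h$ carries the additive term $S\cdot e_{k\ell}$, which is $O(1)$ and does not telescope, so chaining it backward along ancestors does not by itself produce a quantity that propagates cleanly; and at a perturbed, non-grid witness point $d(x,y)$ need no longer vanish, so the displayed non-vanishing conclusion is not justified.)

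The paper's proof avoids the $-CK^n$ error term altogether, which is the idea you are missing. It works with the collinearity defect $d_f(x^{(1)},x^{(2)},x^{(3)};y)$ at three points on a horizontal line: because $g_{ij}$ and $h\circ(u_i\times v_j)$ are affine in $x$ for fixed $y$, the second difference annihilates them exactly, and the functional equation \eqref{eq: f-recu-prop} together with Lemma~\ref{lem:vertScal} gives the \emph{exact} propagation
\begin{equation*}
  \sum_{(i,j)\in\Lambda_{r_0}(\alpha,\beta)} d_f\bigl(u_i(x_{p_1}),u_i(x_{p_2}),u_i(x_{p_3});v_j(y_q)\bigr)\ \ge\ \gamma_{r_0t_0}\,\delta_0,
\end{equation*}
with no additive correction. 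Non-degeneracy plus Remark~\ref{rem:colliner} supplies three non-collinear interpolation points, hence $\delta_0>0$, and connectedness of $V$ pushes a positive defect $\delta_k$ into every $\Lambda_{r_k}$. Iterating the exact propagation along all $n$-paths and using $O(f,E)\ge d_f(\cdot)$ cellwise then yields directly
\begin{equation*}
  O(f,n,B_{r_\ell})\ \ge\ \sum_{k}\bigl((G|_V)^n\bigr)_{\ell k}\,\delta_k\ \ge\ \bigl(\rho(G|_V)\bigr)^n\xi_\ell,
\end{equation*}
where $\xi$ is a positive eigenvector scaled so that $\xi_k\le\delta_k$; the conclusion follows from $\rho(G|_V)>K$. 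So the correct route is not ``recurrence plus seed'' but a lower bound that is multiplicative from the start; your write-up names the right three-point quantity but does not exploit its exact transformation law, which is the whole point.
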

\begin{proof}

  Denote by $n_V$ the cardinality of $V$, and assume that $V=\{r_1,\ldots,r_{n_V}\}$. Firstly, we show that following claim holds: \emph{for all $1\leq k\leq n_V$,
  there exist $(i^{(k)},j^{(k)})\in \Lambda_{r_{k}}$ and  three points $(x_{k,\tau},y^{(k)})$, $\tau=1,2,3$ in $D_{i^{(k)},j^{(k)}}$, such that
  \begin{equation*}
    \delta_k:=d_f(x_{k,1},x_{k,2},x_{k,3};y^{(k)})>0.
  \end{equation*}}

  Since $V$ is non-degenerate, there exist $t^*\in V$ and $(i^*,j^*)\in \gL_{t^*}$ such that $(i^*,j^*)$ is non-degenerate. Thus there exists $(k_{0},\ell_{0})\in A(i^*,j^*)$ such that both \eqref{eq:s-degenerate} and \eqref{eq:degenerate} do not hold. By Remark~\ref{rem:colliner}, we can assume without loss of generality that
  there exist three interpolation points $\{(x_{p_i},y_q,z_{p_i,q})\}_{1\leq i\leq 3}$ in $D'_{k_0\ell_0}$ with $p_1<p_2<p_3$, such that they are not collinear. That is, if we denote $\lambda_0=(x_{p_3}-x_{p_2})/(x_{p_3}-x_{p_1})$, then
  \begin{align*}
    \delta_0 & :=d_f(x_{p_1},x_{p_2},x_{p_3};y_q)  =|z_{p_2,q} -(\lambda_0 z_{{p_1},q}+(1-\lambda_0)z_{p_3,q})| >0.
  \end{align*}

  Let $r_0$ and $t_0$ be elements in $\{1,2,\ldots,m\}$ satisfying $(k_0,\ell_0)\in \gL_{r_0}\cap \gL_{t_0}'$.
  Let $\ga$ and $\gb$ be elements in $\{0,1,\ldots,N-K\}$ satisfying $[x_\ga,x_{\ga+K}]\times [y_\gb,y_{\gb+K}]=D_{k_{0}\ell_{0}}'$.
  Notice that $g_{ij}$ are the same for all $(i,j)\in \gL_{r_0}(\ga,\gb)$, since they are the same bilinear function passing through the four points $(x_\ga,y_\gb,z_{\ga,\gb})$, $(x_\ga,y_{\gb+K},z_{\ga,\gb+K})$, $(x_{\ga+K},y_\gb,z_{\ga+K,\gb})$ and $(x_{\ga+K},y_{\gb+K},z_{\ga+K,\gb+K})$. We denote it by $\widetilde{g}_{\alpha\beta}$.

  For $(i,j)\in\gL_{r_0}(\ga,\gb)$, we define $\theta_{ij}=1$ if $S$ is nonnegative on $D_{ij}$, and define $\theta_{ij}=-1$ otherwise. Then $|S(x,y)|=\theta_{ij}S(x,y)$ for all $(x,y)\in D_{ij}$. Combining this with \eqref{eq: f-recu-prop} and \eqref{eq: Fij-def},
  \begin{align*}
    \theta_{ij}f(u_i(x),v_j(y))=|S(u_i(x),v_j(y))|\big(f(x,y)-g_{ij}(x,y)\big) +\theta_{ij}h(u_i(x),v_j(y))
  \end{align*}
  for $(x,y)\in D_{ij}'=[x_\ga,x_{\ga+K}]\times [y_\gb,y_{\gb+K}]$. By  Lemma~\ref{lem:vertScal},
  \begin{align*}
    &\sum_{(i,j)\in \gL_{r_0}(\ga,\gb)}\theta_{ij} f(u_i(x),v_j(y)) \\
    = &\gamma_{r_0 t_0} \big(f(x,y) - \widetilde{g}_{\alpha\beta}(x,y)\big)
      +\sum_{(i,j)\in \gL_{r_0}(\ga,\gb)} \theta_{ij}h(u_i(x),v_j(y))
  \end{align*}
  for all $(x,y)\in [x_\ga,x_{\ga+K}]\times [y_\gb,y_{\gb+K}]$.
  On the other hand, since both $\widetilde{g}_{\ga\gb}$ and $h|_{D_{ij}}$, $(i,j)\in \gL_{r_0}(\ga,\gb)$ are bilinear, we have
  \begin{align*}
    &\wdt{g}_{\ga\gb}(x_{p_2},y_q)=\lambda_0 \wdt{g}_{\ga\gb}(x_{p_1},y_q)+(1-\lambda_0) \wdt{g}_{\ga\gb}(x_{p_3},y_q), \quad \textrm{and}\\
    &h(u_i(x_{p_2}),v_j(y_q))=\lambda_0 h(u_i(x_{p_1}),v_j(y_q))+(1-\lambda_0) h(u_i(x_{p_3}),v_j(y_q))
  \end{align*}
  for all $(i,j)\in \gL_{r_0}(\ga,\gb)$. Thus
  \begin{align*}
    &\sum_{(i,j)\in \gL_{r_0}(\ga,\gb)} d_f(u_i(x_{p_1}),u_i(x_{p_2}),u_i(x_{p_3});v_j(y_q)) \\
    \geq & \, \Big|\sum_{(i,j)\in \gL_{r_0}(\ga,\gb)}\theta_{ij}f(u_i(x_{p_2}),v_j(y_q))-\lambda_0\sum_{(i,j)\in \gL_{r_0}(\ga,\gb)}\theta_{ij}f(u_i(x_{p_1}),v_j(y_q))\\
    &\hspace{2em}-(1-\lambda_0)\sum_{(i,j)\in \gL_{r_0}(\ga,\gb)}\theta_{ij}f(u_i(x_{p_3}),v_j(y_q))\Big|\\
    = &\, \gamma_{r_0 t_0}\big|f(x_{p_2},y_q)-\big(\lambda_0 f(x_{p_1},y_q)+(1-\lambda_0)f(x_{p_3},y_q)\big)\big|
                 =\gamma_{r_0t_0}\delta_0.
  \end{align*}
  As a result, there exists $(i,j)\in \gL_{r_0}(\ga,\gb)$ such that
  \begin{align*}
    d_f(u_i(x_{p_1}),u_i(x_{p_2}),u_i(x_{p_3});v_j(y_q))>0.
  \end{align*}
  We remark that $(u_i(x_{p_\tau}),v_j(y_q))$, $\tau=1,2,3$ are three points in $D_{ij}$.

  From $(i^*,j^*)\in \gL_{t^*}$ and $(k_{0},\ell_{0})\in A(i^*,j^*)\cap \gL_{r_0}\cap \gL_{t_0}'$, there is a path from $t_0$ to $t^*$. Since $V$ is connected, we know that for all $1\leq k\leq n_V$, there is a path  from $t^*$ to $r_{k}$ so that there is a path from $t_0$ to $r_{k}$.
  Similarly as above, we can prove by induction that the claim holds.

  Now we will show \eqref{eq:lem-4.9} holds if $\rho(G|_{V})>K$.

  Let $\xi=(\xi_{1},\xi_{2},\dots,\xi_{n_V})^T$ be a strictly positive eigenvector of $G|_{V}$ with eigenvalue $\rho(G|_{V})$ such that $\xi_{k}\leq \delta_{k}$ for all $1\leq k\leq n_V$.

  Given $1\leq k,\ell \leq n_V$ and $n\in \bZ^+$, we denote by $\PVn(r_k,r_\ell)$ the set of all $n$-paths in $V$ from $r_k$ to $r_\ell$. Given $\overrightarrow{r} =\{r(0),r(1),\ldots,r(n)\}\in\PVn(r_k,r_\ell)$, we denote by $\mathcal{Q}(\overrightarrow{r})$ the set of all elements $(i_0i_1\cdots i_n,j_0 j_1\cdots j_n)$ in $\Sigma_N^{n+1}\times \Sigma_N^{n+1}$ satisfying $(i_0,j_0)=(i^{(k)},j^{(k)})$,   $D_{i_{\tau-1},j_{\tau-1}}\subset D_{i_{\tau},j_{\tau}}^\prime$ and $(i_{\tau},j_{\tau})\in\gL_{r(\tau)}$ for all $1\leq \tau\leq n$.

  Given $(\mathbf{i},\mathbf{j})=(i_0i_1\cdots i_n,j_0j_1\cdots j_n)\in \mathcal{Q}(\overrightarrow{r})$ and $(x,y)\in D_{i_0j_0}$, we define
   \begin{align*}
    u_{\mathbf{i}}(x)=u_{i_{n}}\circ u_{i_{n-1}}\circ \cdots \circ u_{i_{1}}(x), \qquad v_{\mathbf{j}}(y)=v_{j_{n}}\circ v_{j_{n-1}}\circ \cdots \circ v_{j_{1}}(y).
   \end{align*}

   Given $1\leq k,\ell\leq n_V$ and $n\geq 1$, for each $\overrightarrow{r} =\{r(0),r(1),\cdots, r(n)\}\in\PVn(r_k,r_\ell)$,
  similarly as above, we have
  \begin{align*}
    \sum_{(\bf i,\bf j)\in\mathcal{Q}(\overrightarrow{r})}  d_f(u_{\bf i}(x_{k,1}),u_{\bf i}(x_{k,2}),u_{\bf i}(x_{k,3}); v_{\bf j}(y^{(k)}))
    =\delta_{k}\prod_{t=1}^n \gamma_{r(t),r(t-1)}
  \end{align*}
  so that
  \begin{align*}
      &\sum_{\overrightarrow{r}\in\mathcal{P}_V^{(n)}(r_{k},r_{\ell})}
    \sum_{(\bfi,\bfj)\in \mathcal{Q}(\overrightarrow{r})} O \big(f, (u_{\bfi}\times v_{\bfj})(D_{i^{(k)},j^{(k)}}) \big) \\
    \geq \, &
    \delta_{k}\sum_{\{r(0), \ldots, r(n)\}\in\mathcal{P}_V^{(n)}(r_{k},r_{\ell})}
    \prod_{t=1}^n \gamma_{r(t),r(t-1)}\\
    = \, &\delta_{k}\sum_{\substack{r(1), \ldots, r(n-1)\in V \\ r(0)=r_k,r(n)=r_\ell}}
    \prod_{t=1}^n \gamma_{r(t),r(t-1)}=\delta_k\big((G|_V)^n\big)_{\ell k}.
  \end{align*}
  As a result,
  \begin{align*}
  O(f,n,B_{r_{\ell}})
  &\geq
  \sum_{1\leq k\leq n_V}
  \sum_{\overrightarrow{r}\in\mathcal{P}_V^{(n)}(r_{k},r_{\ell})}
  \sum_{(\bfi,\bfj)\in \mathcal{Q}(\overrightarrow{r})} O \big(f, (u_{\bfi}\times v_{\bfj})(D_{i^{(k)},j^{(k)}}) \big) \\
  &\geq
  \sum_{1\leq k\leq n_V}\big((G|_V)^n\big)_{\ell k} \delta_k.
  \end{align*}
  Thus
  \begin{align*}
    \big(O(f,n,B_{r_1}), \ldots, O(f,n,B_{r_{n_V}}) \big)^T & \geq ( G|_V)^n (\delta_1,\ldots,\delta_{n_V})^T \\
    &\geq (G|_V)^n (\xi_1,\ldots,\xi_{n_V})^T \\
    & \geq (\rho(G|_V))^n (\xi_1,\ldots,\xi_{n_V})^T.
  \end{align*}
  Hence for all $1\leq k\leq n_V$, we have
  \begin{align*}
    O(f,n,B_{r_k})\geq (\rho(G|_{V}))^{n}\xi_k.
  \end{align*}
  The lemma follows from $\rho (G|_{V})>K$.
\end{proof}


For each $1\leq r\leq m$, we define
 \begin{align*}
   A^*(r)=\{t\in \{1,\ldots,m\}:\, \mbox{$t\not\sim r$ and there exists a path from $t$ to $r$} \}.
 \end{align*}
Then we recurrently define $P(r)$, $r=1,2,\ldots,m$ as follows: $P(r)=1$ if $A^*(r)=\emptyset$, and $P(r)=1+\max\{P(t):\, t\in A^*(r)\}$ if $A^*(r)\not=\emptyset$. We call $P(r)$ the \emph{position} of $r$.

Now we can use Lemmas~\ref{lem:Osi-local-ineq-1} and \ref{lem:Osi-global-ineq} to obtain the exact box dimension of bilinear RFISs under certain constraints. The spirit of the proof follows from \cite{BEH89,KRZ18,RSY00}.
\begin{theo}\label{thm: box-dim}
Let $f$ be the bilinear RFIF determined in the section~\ref{sec:bilinear} with conditions \eqref{eq:hom-cond-1}, \eqref{eq:hom-cond-2} and \eqref{eq:hom-cond-3}.   Assume that the vertical scaling factors $\{s_{ij}:\, i,j\in \Sigma_{N,0}\}$ are steady and have uniform sums $\{\gamma_{rt}:\, \Lambda_r\cap \Lambda_t^\prime\not=\emptyset\}$ under a compatible partition $\SB=\{B_r\}_{r=1}^m$.  Let  $\{V_1,V_2,\ldots,V_{n_c^*}\}$ be the set of all non-degenerate connected components of $\{1,2,\ldots,m\}$. Define $d_i=\log \rho(G|_{V_i})/ \log K$ for $ 1\leq i \leq n_c^*$ and let $d_*=\max\{d_{1},\dots,d_{n_c^*},1\}$. Then $\dim_B (\Gamma f)=1+d_*$.
\end{theo}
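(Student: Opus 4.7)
By Lemma~\ref{lem:box-dim}, it suffices to control the asymptotics of $O(f,n) = \sum_{r=1}^m O(f,n,B_r)$ as $n\to\infty$. The engine will be Lemma~\ref{lem:Osi-local-ineq-1}, which in vector form reads $\vec{O}(n+1) = G\,\vec{O}(n) + \vec{\gep}(n)$ with $\|\vec{\gep}(n)\|_\infty \le CK^n$, where $\vec{O}(n) = (O(f,n,B_1),\ldots,O(f,n,B_m))^T$. Iterating this gives, componentwise,
\[
  O(f,n,B_r) \le \sum_t (G^n)_{rt}\,O(f,0,B_t) + C\sum_{k=0}^{n-1} K^k \sum_t (G^{n-1-k})_{rt},
\]
and my plan is to combine this inequality with spectral information on $G$ for the upper bound, and with Lemma~\ref{lem:Osi-global-ineq} for the lower bound.

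\textbf{Upper bound.} For each degenerate connected component $V$ and each $r\in V$, Lemma~\ref{lem:degenerate} directly gives $O(f,n,B_r)\le CK^n$. For the rest, I would reorder indices so that $G$ becomes block upper triangular, with diagonal blocks $G|_{V_i}$ for the non-degenerate components $V_1,\ldots,V_{n_c^*}$ and analogous blocks for degenerate ones. The Perron--Frobenius theorem bounds each irreducible diagonal block by a multiple of $\rho(G|_{V_i})^n$, and an induction on the position $P(r)$ then shows that every entry $(G^n)_{rt}$ is at most $C\,n^{P(r)-1}\,\rho^n$, where $\rho=\max_i \rho(G|_{V_i})$. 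Plugging this into the iterated recurrence and absorbing the $CK^n$ error yields $O(f,n,B_r) \le C\,n^{P(r)}\,\max(K,\rho)^n = C\,n^{P(r)}\,K^{d_* n}$, since $\max(K,\rho)=K^{d_*}$ by definition of $d_*$. Summing over $r$ and applying the upper bound in Lemma~\ref{lem:box-dim} would then give $\overline{\dim}_B\Gamma f\le 1+d_*$.

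\textbf{Lower bound.} If $d_*=1$, then $\Gamma f$ is the graph of a continuous function on $[0,1]^2$, so $\underline{\dim}_B\Gamma f\ge 2 = 1+d_*$ automatically. If $d_*>1$, I would pick $i_0$ with $d_{i_0}=d_*$, so $\rho(G|_{V_{i_0}})>K$, and invoke Lemma~\ref{lem:Osi-global-ineq}; inspecting its proof actually produces the pointwise estimate $O(f,n,B_r)\ge \rho(G|_{V_{i_0}})^n\,\xi_r$ for $r\in V_{i_0}$ and a strictly positive Perron eigenvector $\xi$. This yields $O(f,n)\ge c\,\rho(G|_{V_{i_0}})^n$, and the lower bound in Lemma~\ref{lem:box-dim} then gives $\underline{\dim}_B\Gamma f\ge 1+d_{i_0}=1+d_*$. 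Combining the two inequalities would complete the proof.

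\textbf{Main obstacle.} The hard part will be the upper bound when $G$ is reducible: polynomial factors $n^{P(r)}$ naturally arise from long chains of positions in the DAG of connected components, and one must verify that they are absorbed by $K^{d_* n}$ in the $\log/(n\log K)$ limit. The induction on $P(r)$, which peels off one connected component at a time starting from the sources (those $r$ with $P(r)=1$), is the standard mechanism for this and follows the spirit of the matrix-theoretic arguments in \cite{BEH89,RSY00}. A subsidiary technical point is that the matching between the vertical scaling factors $s_{ij}$ and the partition $\SB$, packaged through the uniform-sum condition, is exactly what converts the local oscillation recurrence in Lemma~\ref{lem:Osi-local-ineq-1} into a \emph{single} matrix recurrence on $m$ unknowns, regardless of how refined the underlying grid may be.
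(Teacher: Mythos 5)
Your overall architecture (Lemma~\ref{lem:box-dim} to reduce to oscillations, Lemma~\ref{lem:Osi-local-ineq-1} as a vector recurrence, Lemma~\ref{lem:Osi-global-ineq} for the lower bound, induction on the position $P(r)$ for the upper bound) matches the paper, and your lower bound is sound: the pointwise estimate $O(f,n,B_{r})\geq \rho(G|_{V_{i_0}})^{n}\xi_{r}$ is indeed established inside the proof of Lemma~\ref{lem:Osi-global-ineq}, so you may quote it directly, whereas the paper uses only the stated conclusion of that lemma to seed an induction at a large $n_0$ and then propagates exponential growth through the recurrence \eqref{ineq:Osi-Brk-rec}; both routes work.

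The upper bound, however, has a genuine gap. You claim that after block-triangularizing $G$ one gets $(G^{n})_{rt}\leq C\,n^{P(r)-1}\rho^{n}$ with $\rho=\max_i\rho(G|_{V_i})$ taken over the \emph{non-degenerate} components only, and you then plug this into the iterated recurrence $O(f,n,B_r)\leq\sum_t (G^n)_{rt}O(f,0,B_t)+\cdots$. This matrix estimate is false in general: a \emph{degenerate} connected component $V$ (degenerate via the collinearity condition \eqref{eq:degenerate} rather than via vanishing $s_{ij}$) can carry arbitrarily large entries $\gamma_{rt}$, since each $\gamma_{rt}$ is a sum over $\Lambda_r(\alpha,\beta)$ of values $|S|<1$ with no bound on the cardinality of $\Lambda_r(\alpha,\beta)$ in terms of $K$; hence $\rho(G|_V)$ may exceed $K^{d_*}$, and the powers of the full matrix $G$ are then dominated by the degenerate blocks. (Extreme case: all data coplanar, every component degenerate, $d_*=1$, yet $\rho(G)$ can be huge.) Your preliminary observation that $O(f,n,B_r)\leq CK^{n}$ for degenerate $r$ is correct but is never fed back into the iteration: once you pass to powers of $G$, the recurrence no longer "knows" that the oscillation over degenerate components stays small even though their $\gamma$-entries are large. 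The repair is exactly what the paper's induction on $P(r)$ does: run the induction on the quantities $O(f,n,B_r)$ themselves rather than on $(G^n)_{rt}$, invoke Lemma~\ref{lem:degenerate}(2) for every degenerate $r$ encountered at each level, and fold the already-bounded terms $O(f,n,B_t)$ with $P(t)\leq P$ into an inhomogeneous error of size $O((\rho_0+\delta)^{n})$ before solving the one-block recurrence \eqref{eq:N-rec-rellysses}; the paper's $(\rho_0+\delta)^{n}$ device also replaces your polynomial factors $n^{P(r)}$, which would otherwise be harmless in the $\log/(n\log K)$ limit. A minor further omission is the case of an index $r$ belonging to no connected component, which needs (and gets, in the paper) a separate one-line treatment.
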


\begin{proof}
Denote $\rho_0=K^{d_*}$. Firstly, we will prove $\underline{\dim}_{B} (\Gamma f)\geq 1+d_*$. It is clear that $\underline{\dim}_{B} (\Gamma f)\geq2$ since $f$ is continuous. Thus we only need to consider the case $d_*>1$. Let $i_0$ be an element in $\{1,\ldots,n_c^*\}$ satisfying $d_{i_0}=d_*$. Assume that $V_{i_0}=\{r_{1},\dots,r_{q}\}$ for some $1\leq q \leq m$, where $r_1<\cdots<r_q$. Then $\rho_{0}=\rho(G|_{V_{i_0}})$. Let $\xi=(\xi_{1},\ldots,\xi_{q})^T$ be a strictly positive eigenvector of $G|_{V_{i_0}}$ with eigenvalue $\rho_{0}$. Then
\begin{equation*}
  \sum_{\ell=1}^q \gamma_{r_k,r_\ell} \xi_\ell=\rho_0\xi_k
\end{equation*}
for all $1\leq k,\ell\leq q$. From Lemma~\ref{lem:Osi-local-ineq-1}, there exists a  constant $C>0$ such that
   \begin{equation*}
   O(f,n+1,B_{r_{k}})\geq \sum_{t=1}^m \gamma_{r_{k},t} O(f,n,B_{t})-CK^{n} \label{eq:N-rec-relly}
  \end{equation*}
   for all $1\leq k\leq q$ and $n\in \mathbb{Z}^+$.  Choose a constant $C_{1}>0$ such that $C_{1}\xi_{k}\geq C$ for all $k$. Then for $1\leq k\leq q$ and $n\in \mathbb{Z}^+$, we have
   \begin{align}\label{ineq:Osi-Brk-rec}
      O(f,n+1,B_{r_{k}})\geq \sum_{\ell=1}^q \gamma_{r_{k},r_{\ell}} O(f,n,B_{r_{\ell}})-C_{1}\xi_{k}K^{n}.
  \end{align}


It follows from $d_*>1$ that $\rho_0=K^{d_*}>K$. Thus, from Lemma~\ref{lem:Osi-global-ineq}, we can pick $n_0\in \mathbb{Z}^+$ large enough and $C_2>0$ small enough such that
 \begin{equation*}
   O(f,n_0,B_{r_{k}})\geq C_{2} \rho_0^{n_0 }\xi_{k}+\frac{C_{1}K^{n_0}}{\rho_{0}-K}\xi_{k}
 \end{equation*}
 for $1\leq k\leq q$. Combining this with \eqref{ineq:Osi-Brk-rec}, we know that for $1\leq k\leq q$,
\begin{align*}
  O(f,n_0+1,B_{r_{k}})
  &\geq C_{2}\rho_0^{n_0}\sum_{\ell=1}^q \gamma_{r_{k},r_{\ell}}\xi_{\ell} + \frac{C_{1}K^{n_0}}{\rho_{0}-K} \sum_{\ell=1}^q \gamma_{r_{k},r_{\ell}}\xi_{\ell}-C_{1}\xi_{k}K^{n_0}\\
  & =C_{2}\rho_0^{n_0+1}\xi_{k} + \frac{C_1 K^{n_0+1}}{\rho_0-K} \xi_k.
\end{align*}
By induction, we can obtain that for all $1\leq k\leq q$ and $n\geq n_0$,
\begin{equation*}
  O(f,n,B_{r_{k}})\geq C_{2} \rho_0^{n }\xi_{k}+\frac{C_{1}K^{n}}{\rho_{0}-K}\xi_{k}
    \geq  C_{2} K^{n d_*}\xi_{k}.
\end{equation*}
Notice that $O(f,n)\geq O(f,n,B_{r_1})$ for all $n\in \bZp$. Thus using Lemma~\ref{lem:box-dim}, we have $\underline{\dim}_{B}(\Gamma f)\geq 1+d_*$.

Now we will show that the following claim holds:\, \emph{for all $1\leq r\leq m$ and all $\delta>0$, there exists $\wdt{C}>0$ such that
\begin{equation}\label{ineq:upperBoxD-Est}
  O(f,n,B_r)\leq \wdt{C}(\rho_0+\delta)^n
\end{equation}
for all $n\in \bZ^+$.}

We will prove this by using induction on $P(r)$. Denote $P_{\max}=\max\{P(r): \, 1\leq r\leq m\}$.


In the case that $P(r)=1$, we have $A^*(r)=\emptyset$. If $r$ is  degenerate, then from Lemma~\ref{lem:degenerate} (2) and $\rho_0=K^{d_*}\geq K$, we know that the claim holds. Thus we can assume that $r$ is  non-degenerate. Combining this with $P(r)=1$, there exists a non-degenerate connected component $V_{i}$ such that $r\in V_{i}$.   Assume that $V_{i}=\{r_{1},\dots,r_q\}$ for some $1\leq q\leq m$, where $r_1<\cdots<r_q$. Denote $\rho_{i}=\rho(G|_{V_{i}})$. Let $\eta=(\eta_{1},\ldots,\eta_{q})^T$ be a strictly positive eigenvector of $G|_{V_{i}}$ with eigenvalue $\rho_{i}$. It follows from $A^*(r)=\emptyset$ that $\gamma_{r_k,t}=0$ for all $1\leq k\leq q$ and $t\not\in V_i$. Thus, from Lemma~\ref{lem:Osi-local-ineq-1}, there exists a  constant $C>0$ such that
   \begin{equation*}
   O(f,n+1,B_{r_{k}})\leq \sum_{\ell=1}^{q}\gamma_{r_{k},r_\ell} O(f,n,B_{r_\ell})+CK^{n} \label{eq:N-rec-relly}
  \end{equation*}
for all $1\leq k\leq q$  and $n\in \mathbb{Z}^+$.  Choose a constant $C_{1}>0$ such that $C_{1}\eta_{k}\geq C$ for all $1\leq k\leq q$.
Then for all $1\leq k\leq q$  and $n\in \mathbb{Z}^+$,
   \begin{align}\label{eq:N-rec-rellysut}
     O(f,n+1,B_{r_{k}})&\leq \sum_{\ell=1}^{q}\gamma_{r_{k},r_{\ell}} O(f,n,B_{r_{\ell}})+C_{1} \rho_0^{n} \eta_{k} .
  \end{align}

Arbitrarily pick $\delta>0$. Let $C_{2}$ be a positive constant such that
\begin{equation*}
O(f,1,B_{r_{k}})\leq C_{2} \rho_i \eta_k + C_1 \delta^{-1} (\rho_0+\delta)\eta_{k}
\end{equation*}
 for all $1\leq k\leq q$. By induction and using \eqref{eq:N-rec-rellysut}, we have
\begin{equation}\label{eq:N-rec-rellsu}
  O(f,n,B_{r_{k}}) \leq C_{2} \rho_i^n \eta_k + C_1 \delta^{-1} (\rho_0+\delta)^n \eta_{k}
\end{equation}
for all $1\leq k\leq q$  and $n\in \mathbb{Z}^+$, where we use
\begin{align*}
  &\sum_{\ell=1}^q \gamma_{r_k,r_\ell} \Big( C_2 \rho_i^n \eta_\ell + C_1 \delta^{-1} (\rho_0+\delta)^n\eta_\ell \Big) + C_1 \rho_0^n \eta_k \\
  \leq \, & C_2 \rho_i^{n+1} \eta_k + C_1 \delta^{-1} (\rho_0+\delta)^n \rho_i \eta_k + C_1(\rho_0+\delta)^n \eta_k \\
  \leq \, & C_2 \rho_i^{n+1} \eta_k + C_1 \delta^{-1} (\rho_0+\delta)^{n+1} \eta_{k}.
\end{align*}
It follows that the claim holds if $P(r)=1$.

Assume that the claim holds for all $1\leq r\leq m$ satisfying $P(r)\leq P$, where $1\leq P<P_{\max}$. Let $r$ be an element in $\{1,2,\ldots,m\}$ satisfying $P(r)= P+1$. It is clear that \eqref{ineq:upperBoxD-Est} holds if $r$ is degenerate. Thus we can assume that $r$ is non-degenerate. In the case that $r$ does not belong to any connected component, we have $P(t)\leq P$ for any $1\leq t\leq m$ with $\gamma_{rt}>0$.
Combining this with Lemma~\ref{lem:Osi-local-ineq-1}, there exists a constant $C>0$, such that
\begin{align*}
  O(f,n+1,B_r) \leq \Big( \sum_{t=1}^m \gamma_{rt} \Big) \sum_{t:\, P(t)\leq P} O(f,n,B_t) + C K^n, \qquad \forall n\geq 1.
\end{align*}
By inductive assumption, we can see that the claim holds in this case.

Now we consider the case that $r$  belongs to a connected component $V_{i}=\{r_{1},\dots,r_{q}\}$, where $r_1<\cdots<r_q$. Similarly as above, let $\eta=(\eta_{1},\ldots,\eta_{q})^T$ be a strictly positive eigenvector of $G|_{V_{i}}$ with eigenvalue $\rho_i=\rho(G|_{V_{i}})$.  From Lemma~\ref{lem:Osi-local-ineq-1}, there exists a  constant $C>0$ such that
   \begin{equation*}
   O(f,n+1,B_{r_{k}})\leq \sum_{\ell=1}^q \gamma_{r_{k},r_\ell} O(f,n,B_{r_\ell})+\sum_{t: P(t)\leq P }\gamma_{r_{k},t} O(f,n,B_{t})+CK^{n} \label{eq:N-rec-rellys}
  \end{equation*}
   for all $1\leq k\leq q$  and $n\in \mathbb{Z}^+$. Thus, given $\delta>0$, by using inductive assumption, there exists a  constant $C_{1}>0$ such that
   \begin{align}\label{eq:N-rec-rellysses}
  O(f,n+1,B_{r_{k}})\leq \sum_{\ell=1}^{q}\gamma_{r_{k},r_{\ell}} O(f,n,B_{r_{\ell}})+C_{1}\eta_{k}(\rho_{0}+\delta)^{n}
  \end{align}
  for all $1\leq k\leq q$  and $n\in \mathbb{Z}^+$.
  Similarly as above, there exists $C_{2}>0$ such that for all $1\leq k\leq \ell$  and $n\in \mathbb{Z}^+$,
\begin{equation*}
  O(f,n,B_{r_{k}}) \leq C_{2}\rho_{i}^{n}\eta_k + C_{1}\delta^{-1}(\rho_{0}+\delta)^{n}\eta_{k}.
\end{equation*}
Hence the claim holds for $P(r)=P+1$.

By induction, the claim holds for all $1\leq r\leq m$. Combining this with Lemma~\ref{lem:box-dim}, we can see from the arbitrariness of $\delta$ that $\overline{\dim}_B \Gamma f|_{B_{r}}\leq 1+d_{*}$ for all $1\leq r\leq m$. Thus $\overline{\dim}_B (\Gamma f)\leq 1+d_{*}$.
As a result, $\dim_B (\Gamma f)= 1+d_{*}$.
\end{proof}

We can  obtain the following corollary immediately.
\begin{cor}
Let $f$ be the bilinear RFIF determined in the section~\ref{sec:bilinear} with conditions \eqref{eq:hom-cond-1}, \eqref{eq:hom-cond-2} and \eqref{eq:hom-cond-3}.   Assume that the vertical scaling factors $\{s_{ij}:\, i,j\in \Sigma_{N,0}\}$ are steady and have uniform sums $\{\gamma_{rt}:\, \Lambda_r\cap \Lambda_t^\prime\not=\emptyset\}$ under a compatible partition $\SB=\{B_i\}_{i=1}^m$. Then, in the case that $G$ is irreducible, we have
\begin{enumerate}
  \item $\dim_B (\Gamma f)=1+\log \rho(G)/\log K$ if $\{1,2,\ldots,m\}$ is non-degenerate and $\rho(G)>K$, or
  \item $\dim_B (\Gamma f)=2$ otherwise.
\end{enumerate}
\end{cor}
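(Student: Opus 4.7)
The plan is to derive the corollary directly from Theorem~\ref{thm: box-dim} by exploiting what irreducibility of $G$ forces about the connected components of $\{1,2,\ldots,m\}$. Since $G$ being irreducible is equivalent to $\{1,2,\ldots,m\}$ being connected (as recorded in the paper just after Lemma~\ref{lem:Osi-global-ineq}), the set $\{1,2,\ldots,m\}$ is itself the unique maximal connected subset. Hence there is at most one connected component, namely $V=\{1,2,\ldots,m\}$ itself, and there is at most one non-degenerate connected component, which can only be $V$ itself when $V$ is non-degenerate.

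Now I would split into the two cases appearing in the corollary. In case~(1), when $\{1,2,\ldots,m\}$ is non-degenerate and $\rho(G)>K$, the only non-degenerate connected component is $V_1=\{1,\ldots,m\}$ with $G|_{V_1}=G$, so $n_c^*=1$ and $d_1=\log\rho(G)/\log K>1$. Then $d_*=\max\{d_1,1\}=d_1$, and Theorem~\ref{thm: box-dim} gives $\dim_B(\Gamma f)=1+\log\rho(G)/\log K$. In case~(2), either $\{1,2,\ldots,m\}$ is degenerate, in which case there are no non-degenerate connected components and so $d_*=\max\{1\}=1$, or $\{1,2,\ldots,m\}$ is non-degenerate but $\rho(G)\leq K$, in which case $d_1\leq 1$ and again $d_*=1$. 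In both subcases, Theorem~\ref{thm: box-dim} yields $\dim_B(\Gamma f)=1+1=2$.

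There is no genuine obstacle here: the corollary is purely a bookkeeping consequence of the main theorem together with the standard equivalence between irreducibility of a nonnegative matrix and connectedness of the associated directed graph. The only point worth stating carefully is that when all of $\{1,\ldots,m\}$ is degenerate the set of non-degenerate connected components is empty, so by the definition $d_*=\max\{1\}=1$, which is why $\dim_B(\Gamma f)=2$ rather than being undefined in that subcase.
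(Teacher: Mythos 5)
Your proposal is correct and matches the paper's approach: the paper states the corollary as an immediate consequence of Theorem~\ref{thm: box-dim}, and your argument simply spells out the bookkeeping (irreducibility makes $\{1,\ldots,m\}$ the unique connected component, so $G|_{V_1}=G$ and $d_*$ reduces to the two cases stated). Nothing is missing.
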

\begin{rem}
  In the case that $K=N$, it is clear that $\mathcal{B}=\{B_1\}=\{[x_0,x_N]\times[y_0,y_N]\}$ is the compatible partition. From Theorem~\ref{thm: box-dim}, we can obtain the box dimension of bilinear FISs under certain constraints, which is the main result in \cite{KRZ18}.
\end{rem}

\subsection{An example}

\begin{figure}[htbp]
    \center
  \includegraphics[height=5cm]{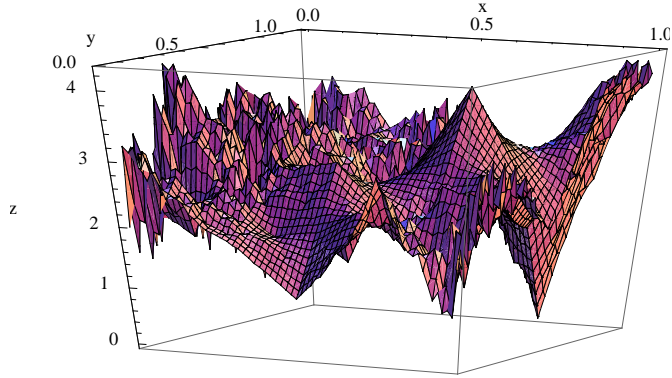}
  \caption{Bilinear RFIS in Example~\ref{ex-fig}}
  \label{figure: FIS-2019}
\end{figure}

\begin{exam}\label{ex-fig}
Let $N=4$, $K=2$, and $x_{0}^\prime=x_{2}^\prime=x_{4}^\prime=0$, $x_{1}^\prime=x_{3}^\prime=\frac{1}{2}$,  $y_{0}^\prime=y_{2}^\prime=y_{4}^\prime=\frac{1}{2}$, $y_{1}^\prime=1$, $y_{3}^\prime=0$. By definition, we have
\begin{align*}
  &u_1(x_0)=x_0, && u_1(x_2)=u_2(x_2)=x_1, && u_2(x_0)=u_3(x_0)=x_2, \ldots, \\
  &v_1(y_2)=y_0, && v_1(y_4)=v_2(y_4)=y_1, && v_2(y_2)=v_3(y_2)=y_2, \ldots.
\end{align*}
Let $B_{1}=[0,\frac{1}{2}]\times[0,\frac{1}{2}]$, $B_{2}=[0,\frac{1}{2}]\times[\frac{1}{2},1]$ and $B_{3}=[\frac{1}{2},1]\times[0,1]$. It is easy to check that $\mathcal{B}=\{B_1,B_2,B_3\}$ is a compatible partition with respect to $\{D_{ij},D_{ij}^\prime;1\leq i,j\leq 4\}$.
Furthermore, $\Lambda_r\cap \Lambda_t^\prime\not=\emptyset$ if and only if $(r,t)\in \{(1,2),(2,1), (3,1), (3,2)\}$.

Let $(r,t)=(1,2)$. Then $\Lambda_1(0,2)=\{(1,1),(1,2),(2,1),(2,2)\}$ so that
\begin{align*}
  \sum_{(i,j)\in \Lambda_1(0,2)} |S(u_i(x_0),v_j(y_2))|
  & = |S(x_0,y_0)|+|S(x_0,y_2)|+|S(x_2,y_0)|+|S(x_2,y_2)|\\
  &=|s_{00}|+|s_{02}|+|s_{20}|+|s_{22}|.
\end{align*}
Similarly, we have
\begin{align*}
  &\sum_{(i,j)\in \Lambda_1(0,2)} |S(u_i(x_0),v_j(y_4))|=2(|s_{01}|+|s_{21}|),\\
  &\sum_{(i,j)\in \Lambda_1(0,2)} |S(u_i(x_2),v_j(y_2))|=2(|s_{10}|+|s_{12}|),\\
  &\sum_{(i,j)\in \Lambda_1(0,2)} |S(u_i(x_2),v_j(y_4))|=4|s_{11}|.
\end{align*}

Assume that vertical scaling factors $\{s_{ij}:\, 0\leq i,j\leq 4\}$ have uniform sums under the compatible partition $\{B_1,B_2,B_3\}$. Then we must have
\begin{equation*}
  \gamma_{12}=|s_{00}|+|s_{02}|+|s_{20}|+|s_{22}|=2(|s_{01}|+|s_{21}|)=2(|s_{10}|+|s_{12}|)=4|s_{11}|.
\end{equation*}
Similarly,
\begin{align*}
  \gamma_{21}=|s_{02}|+|s_{04}|+|s_{22}|+|s_{24}|=2(|s_{03}|+|s_{23}|)=2(|s_{12}|+|s_{14}|)=4|s_{13}|,\\
  \gamma_{31}=|s_{22}|+|s_{24}|+|s_{42}|+|s_{44}|=2(|s_{23}|+|s_{43}|)=2(|s_{32}|+|s_{34}|)=4|s_{33}|,\\
  \gamma_{32}=|s_{20}|+|s_{22}|+|s_{40}|+|s_{42}|=2(|s_{21}|+|s_{41}|)=2(|s_{30}|+|s_{32}|)=4|s_{31}|.
\end{align*}

Let
\begin{equation*}
(s_{ij})_{0\leq i,j\leq 4}=
\left(
\begin{array}{ccccc}
0.85 & 0.9 & 0.95 & 0.9 & 0.9\\
0.2  & 0.45 & 0.8 & 0.7 & 0.6\\
0    &  0   &  0  & 0.5 & 0.95\\
-0.4 & -0.2 &  0  & 0.3 & 0.6\\
-0.8 & -0.4 &  0  & 0.1 & 0.25
\end{array}
\right).
\end{equation*}
By above discussion, we can check that vertical scaling factors are steady and have uniform sums under $\{B_1,B_2,B_3\}$ with
$\gamma_{12}=1.8$, $\gamma_{21}=2.8$, $\gamma_{31}=1.2$ and $\gamma_{32}=0.8$.

Assume that
\begin{equation*}
(z_{ij})_{0\leq i,j\leq 4}=
\left(
\begin{array}{ccccc}
  2 & 3 & 2 & 1 & 2\\
  2 & 2 & 3 & 1 & 3\\
  1 & 3 & 2 & 3 & 1\\
  3 & 2 & 4 & 2 & 0\\
  2 & 3 & 2 & 4 & 4
\end{array}
\right).
\end{equation*}
The corresponding  bilinear RFIS is shown in Figure~\ref{figure: FIS-2019}.

Furthermore, we can check that $V_1=\{1,2\}$ is the unique non-degenerate connected component of $\{1,2,3\}$ with
\begin{equation*}
G|_{V_1}=\left(
\begin{array}{cc}
  0    &  1.8\\
  2.8  &  0\\
\end{array}
\right).
\end{equation*}
Thus $d_{*}= \log 5.04/(2 \log 2)$ so that $\dim_B (\Gamma f)= 1 + \log 5.04 / (2 \log 2)$.
\end{exam}




\bigskip
\bigskip

\begin{center}
{\noindent\bf Acknowledgments}
\end{center}
\medskip

The authors wish to thank Dr. Qing-Ge Kong and Professor Yang Wang for helpful discussions.

\bibliographystyle{amsplain}

\end{document}